\newcommand{\showcomments}{yes}
\newsavebox{\commentbox}
\newenvironment{com}%
{\ifthenelse{\equal{\showcomments}{yes}}%
	{\footnotemark
		\begin{lrbox}{\commentbox}
			\begin{minipage}[t]{1.in}\raggedright\sffamily\tiny
				\footnotemark[\arabic{footnote}]}
			{\begin{lrbox}{\commentbox}}}%
			{\ifthenelse{\equal{\showcomments}{yes}}%
				{\end{minipage}\end{lrbox}\marginpar{\usebox{\commentbox}}}
		{\end{lrbox}}}
\newcounter{ax}
\newtheorem{thm}{Theorem}[section]
\newtheorem{lem}[thm]{Lemma}
\newtheorem{prop}[thm]{Proposition}
\newtheorem{cor}[thm]{Corollary}
\newtheorem{conj}[thm]{Conjecture}
\newtheorem*{thmi}{Theorem}
\newtheorem*{quesi}{Question}
\theoremstyle{definition}
\newtheorem{defn}[thm]{Definition}
\newtheorem{claim*}{Claim}
 \DeclareMathOperator{\link}{Lk}
\newcommand{\Rmnum}[1]{\mathbf{{\expandafter\@slowromancap\romannumeral #1@}}}
\let\oldmarginpar\marginpar
\renewcommand\marginpar[1]{\-\oldmarginpar[\raggedleft\footnotesize #1]%
	{\raggedright\footnotesize #1}}
\newcommand\ds{\displaystyle}
\newcounter{enumitemp}
\def\G{{\mathcal G}}
\newcommand{\CFS}{\ensuremath{\mathcal{CFS}}}
\newcommand{\E}{\mathbb E}
\newcommand{\Pb}{\mathbb P}
\newcommand{\gspan}[1]{\left\langle #1\right\rangle}
\newcommand{\Cayley}[2]{\operatorname{Cay}\left(#1, #2\right)}
\definecolor{figblue}{cmyk}{1,0,0,0}
\definecolor{figred}{cmyk}{0,1,1,0}
\definecolor{figgreen}{cmyk}{.75,0,1,0}
\definecolor{figblue2}{cmyk}{1,1,0,0}
\definecolor{figgreen2}{cmyk}{1,0,1,0}
\long\def\Restate#1#2#3#4{
	\medskip\par\noindent
	{\bf #1 \ref{#2} #3} {\it #4}\par\medskip }
\title{Morse subgroups and boundaries of random right-angled Coxeter groups}
\author[T. Susse]{Tim Susse}
\address{Bard College at Simon's Rock, Great Barrington, Mass., USA}
\email{tsusse@simons-rock.edu}
\subjclass[2010]{05C80, 20F65, 57M15, 60B99, 20F55, 20F69}
\thanks{}
\numberwithin{thm}{section}
\numberwithin{equation}{section}
\begin{document}
	\maketitle
	
	\begin{abstract} We study Morse subgroups and Morse boundaries of random right-angled Coxeter groups in the Erd\H{o}s--R\'enyi model. We show that at densities below  $\left(\sqrt{\frac{1}{2}}-\epsilon\right)\sqrt{\frac{\log{n}}{n}}$ random right-angled Coxeter groups almost surely have Morse hyperbolic surface subgroups. This implies their Morse boundaries contain embedded circles. Further, at densities above $\left(\sqrt{\frac{1}{2}}+\epsilon\right)\sqrt{\frac{\log{n}}{n}}$ we show that, almost surely, the hyperbolic Morse special subgroups of a random right-angled Coxeter group are virtually free. 
	
	We also apply these methods to show that for a random graph $\Gamma$ at densities below\\ $(1-\epsilon)\sqrt{\frac{\log{n}}{n}}$, $\square(\Gamma)$ almost surely contains an isolated vertex. This shows, in particular, that at densities below $(1-\epsilon)\sqrt{\frac{\log{n}}{n}}$ a random right-angled Coxeter group is almost surely not quasi-isometric to a right-angled Artin group \end{abstract}
	
	\section{Introduction}
	Given a simplicial graph $\Gamma$ with vertex set $V$, and edge set $E$ we can form the right-angled Coxeter group with presentation:
	
$$W_\Gamma := \gspan{V \mid v^2=1\ \forall v\in V, vw=wv \iff \{v,w\}\in E}.$$

That is, the group has order two generators whose commuting relations are determined by the edges of $\Gamma$. Much work on right-angled Coxeter groups focuses on the connections between the combinatorics of $\Gamma$ and the geometry and topology of $W_\Gamma$.

In this paper, we will be particularly interested in the Morse boundary of right-angled Coxeter groups, and the relationship between right-angled Coxeter groups and right-angled Artin groups. These groups are defined similarly to right-angled Coxeter groups: given a graph $\Gamma$ the corresponding right-angled Artin group has presentation:

$$A_\Gamma = \gspan{V \mid vw=wv \iff \{v,w\}\in E}.$$  

One direction of this relationship was cemented by the following celebrated theorem of Davis and Januszkiewicz.

\begin{thmi}\cite{DavisJanuszkiewicz} Given a graph $\Gamma$, there exists a graph $\Lambda$ so that $A_\Gamma$ is isomorphic to a finite index subgroup of $W_\Lambda$.\end{thmi}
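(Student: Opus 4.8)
The plan is to prove this by an explicit construction: from $\Gamma$ we build a graph $\Lambda$ together with a finite-index embedding $A_\Gamma \hookrightarrow W_\Lambda$, realized geometrically by having both groups act on one CAT(0) cube complex. The starting point is the Salvetti complex $S_\Gamma$ of $A_\Gamma$: a compact nonpositively curved cube complex with $\pi_1(S_\Gamma)\cong A_\Gamma$, whose universal cover $X:=\widetilde{S_\Gamma}$ is a CAT(0) cube complex on which $A_\Gamma$ acts freely and cocompactly. The goal is to exhibit this same $X$ as the Davis complex $\Sigma_\Lambda$ of a right-angled Coxeter group, so that $A_\Gamma$ becomes a subgroup of $W_\Lambda$ inside $\operatorname{Aut}(X)$, of finite index because the action is cocompact.

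Concretely, I would equip $S_\Gamma$ — or a suitable subdivision or modification of it — with a mirror structure, i.e.\ a family of codimension-one subcomplexes (the mirrors) indexed by a finite set $S$, chosen so that Davis's basic construction over the right-angled Coxeter group on $S$ reassembles the translated chambers into a complex isomorphic to $X$, and so that all vertex links in the result are flag (the Moussong/Gromov nonpositive-curvature criterion). The graph $\Lambda$ is then read off from the mirror structure: $V(\Lambda)=S$, with an edge joining two mirrors exactly when they intersect. The basic construction produces a right-angled Coxeter group $W_\Lambda$ acting on $X\cong\Sigma_\Lambda$ with strict fundamental domain the chamber; since the chamber is compact the action is cocompact, and then, because $A_\Gamma$ also acts cocompactly while the chambers are compact, only finitely many $W_\Lambda$-chambers meet a fundamental domain for $A_\Gamma$. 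As $W_\Lambda$ acts simply transitively on chambers, this says $[W_\Lambda:A_\Gamma]<\infty$, with $A_\Gamma$ identified as the subgroup of $W_\Lambda$ acting freely.

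The recognition step admits a purely group-theoretic repackaging: once $\Lambda$ is in hand, define a homomorphism $\phi\colon W_\Lambda\to Q$ onto a finite group $Q$ that is injective on every spherical special subgroup $W_T\cong(\mathbb{Z}/2)^{|T|}$ ($T$ a clique of $\Lambda$); then $K:=\ker\phi$ is torsion-free of index $|Q|$, hence acts freely and cocompactly on $\Sigma_\Lambda$, so $K\cong\pi_1(\Sigma_\Lambda/K)$ with $\Sigma_\Lambda/K$ a compact nonpositively curved cube complex, and one checks this complex is homotopy equivalent to $S_\Gamma$ — equivalently, that explicit ``even'' elements $t_v\in K$ hit the standard generators of $A_\Gamma$ and satisfy no relations beyond the defining commutations, which can also be verified directly by Reidemeister--Schreier from a Schreier transversal for $\phi$. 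I expect the genuine obstacle to be the \emph{design} of $\Lambda$ (the mirror structure, or the pair $(Q,\phi)$): it must be arranged so that the reassembled complex is simultaneously nonpositively curved, has a compact fundamental domain, and has deck (resp.\ quotient) group exactly $A_\Gamma$. Naive ``doublings'' of $\Gamma$ already fail one of these constraints — for instance, when $\Gamma$ has no edges they embed a free group of the wrong rank into $W_\Lambda$ — so the combinatorial bookkeeping behind the construction is the real content of the theorem.
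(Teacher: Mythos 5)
First, a point of comparison: the paper does not prove this statement at all --- it is quoted from Davis--Januszkiewicz and used as a black box --- so there is no internal proof to measure your attempt against, and your proposal must stand on its own. Judged that way, it has a genuine gap, and you have located it yourself: the theorem asserts the \emph{existence} of a graph $\Lambda$, and your argument never produces one. Everything you describe --- the Salvetti complex, realizing its universal cover as a Davis complex via a mirror structure, reading $\Lambda$ off the intersection pattern of mirrors, the finite-index count via cocompactness, the Reidemeister--Schreier repackaging --- is the correct framework, and it is in fact essentially the strategy of the Davis--Januszkiewicz paper. But every one of those steps is conditional on already having the mirror structure (equivalently, the graph $\Lambda$ and the intended embedding $v\mapsto$ a product of two reflections) in hand, and your writeup ends by deferring exactly that: ``the combinatorial bookkeeping behind the construction is the real content of the theorem.'' That self-assessment is accurate, which is the problem: what you have is an annotated plan, not a proof. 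On the positive side, your observation that the naive doubling fails is correct and shows the right instincts: if $\Gamma$ has $n\ge 2$ vertices and no edges, the edgeless doubled graph gives $(\mathbb{Z}/2)^{\ast 2n}$, whose Euler characteristic $1-n$ forces every torsion-free subgroup of finite index $m$ to be free of rank $1+m(n-1)>n$, so $A_\Gamma=F_n$ cannot be a finite-index subgroup of it.

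The missing construction is short enough that you should supply it. Take $V(\Lambda)=\{s_v,t_v: v\in V(\Gamma)\}$ with edges: $\{t_v,t_w\}$ for all $v\neq w$; $\{s_v,t_w\}$ for all $v\neq w$; $\{s_v,s_w\}$ if and only if $\{v,w\}\in E(\Gamma)$; and no edge $\{s_v,t_v\}$, so that each pair $s_v,t_v$ generates an infinite dihedral group. The assignment $v\mapsto s_vt_v$ defines a homomorphism $A_\Gamma\to W_\Lambda$ (the required commutations are immediate from the edge list), and its image lies in the kernel of the retraction $r\colon W_\Lambda\to (\mathbb{Z}/2)^{|V(\Gamma)|}$ sending both $s_v$ and $t_v$ to the $v$-th coordinate generator; that kernel has index $2^{|V(\Gamma)|}$. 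The geometric argument you outline --- both groups acting on one CAT(0) cube complex, with the $A_\Gamma$-action free and cocompact --- is precisely what shows this homomorphism is injective with image all of $\ker r$. With this $\Lambda$ specified your plan closes up into the Davis--Januszkiewicz proof; without it, the existential claim remains unproved.
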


It is well-known that not every right-angled Coxeter group has a finite index right-angled Artin subgroup, or is even quasi-isometric to a right-angled Artin group (see \cite{BehrstockCharney, DaniThomas:divcox}, showing that right-angled Artin and Coxeter groups have different possible divergence functions). This leads to the following folk question.

\begin{quesi} Which right-angled Coxeter groups are virtually right-angled Artin?\end{quesi}

Much work has been done recently on this question, focusing on the Morse boundary of right-angled Coxeter groups, introduced in \cite{CharneySultan, Cordes:Morse}. In \cite{CCS:Morse}, it is shown that right-angled Artin groups have totally disconnected Morse boundaries, while in \cite{Behrstock:Morse}, an example of a right-angled Coxeter group with quadratic divergence containing an embedded $S^1$ in its Morse boundary is given. To do this, Behrstock found a special subgroup of the right-angled Coxeter group which was virtually a hyperbolic surface group and Morse in the sense of \cite{Tran:quasiconvex}.

As in \cite{CharneyFarber, BHS:Coxeter, BFRHS, BFRS}, here we study the geometry of \emph{random} right-angled Coxeter groups, using the Erd\H{o}s--R\'enyi model of random graphs. In this model, we are given a function $p\colon \mathbb{N}\to (0,1)$, and generate a graph on $n$ vertices by declaring that any pair of vertices is joined by an edge with probability $p(n)$, independently of all other pairs. Using this model we build on Behrstock's work, giving a sharp threshold for one-ended hyperbolic Morse special subgroups in right-angled Coxeter groups.

\Restate{Theorem}{thm:noRAAG}{}{Let $\lambda=\sqrt{\ds\frac{1}{2}}$ and let $\epsilon>0 $. If $np\to \infty$ and  $p(n)<(\lambda-\epsilon)\sqrt{\ds\frac{\log{n}}{n}}$, then a.a.s. $W_\Gamma$ contains a Morse hyperbolic surface subgroup. In particular, the Morse boundary of $W_\Gamma$ contains an embedded copy of $S^1$ and is not totally disconnected.}

\Restate{Corollary}{cor:specialstable}{}{Let $\epsilon>0, \lambda=\sqrt{\ds\frac{1}{2}}$ and let $p(n)>(\lambda+\epsilon)\sqrt{\frac{\log{n}}{n}}$. Then for $\Gamma\in\G(n,p)$, a.a.s. every hyperbolic Morse special subgroup in $W_\Gamma$ is virtually free.}

Expanding our examination to consider a wider variety of Morse subgroups, we also obtain the following insight into the folk question above.

\Restate{Corollary}{cor:noRAAG}{}{Let $\epsilon>0$ if $p(n)<\left(1-\epsilon\right)\sqrt{\ds\frac{\log{n}}{n}}$ and $np\to \infty$, then a.a.s. a right-angled Coxeter group at density $p(n)$ is not quasi-isometric to a right-angled Artin group.}

 In our proofs we must assume that $np\to \infty$. When $np\to 0$, then a.a.s. a random graph is a forest, therefore the corresponding right-angled Coxeter group is virtually free (and hence virtually right-angled Artin). When $np\to c$ with $0<c<\infty$, then  $\Gamma$ contains a cycle with positive probability; however, if $\Gamma$ contains a $k$--cycle, with $k\ge 5$ then the group contains a Morse hyperbolic surfaces subgroup and is not quasi-isometric to a right-angled Artin group. If the only cycles are $3$-- and $4$--cycles, noting that at this density, a.a.s. no two cycles intersect, $\Gamma$ is a tree with some vertices replaced by squares and triangles. It's not hard to see that the corresponding group is virtually a free products of several copies of $\mathbb{Z}^2$ and a free group. Thus, at densities below $(1-\epsilon)\sqrt{\frac{\log{n}}{n}}$, a random right-angled Coxeter group is quasi-isometric to a right-angled Artin group if and only if it is virtually $\left(\ast_{i=1}^k \mathbb{Z}^2\right)\ast F_r$ for some $k,r\ge 0$.

 Recall from \cite{DaniThomas:divcox} that given a graph $\Gamma$, the square-graph of $\Gamma$, denoted by $\square(\Gamma)$, is the graph whose vertices are in one-to-one correspondence with the induced $4$--cycles (squares without diagonals) in $\Gamma$, and two vertices are joined by an edge if and only if the intersection of the two corresponding $4$--cycles contains a pair of non-adjacent vertices. A graph is $\CFS$ if  $\square(\Gamma)$ has a connected component $C$ so that every vertex of $\Gamma$ is contained in some square corresponding to a vertex in $C$.

 We can also use the methods of Theorem~\ref{thm:noRAAG} to show the following, interesting Corollary.
 
 \Restate{Corollary}{cor:disconnect}{}{Let $\epsilon>0$, $p(n)<(1-\epsilon)\sqrt{\frac{\log{n}}{n}}$ and $np\to \infty$ , then for $\Gamma\in\G(n,p)$ a.a.s. $\square(\Gamma)$ contains an isolated vertex. In particular, $\square(\Gamma)$ is disconnected.}
 
 This is particularly interesting as the Corollary mimics the first half of the classic proof of Erd\H{o}s and R\'enyi from \cite{ErdosRenyi1} on the connectedness of random graphs. This also furthers the work of Behrstock, Falgas-Ravry, Hagen and the author \cite{BFRHS, BFRS}, analyzing the square graph in the Erd\H{o}s--R\'enyi model.
 
 In \cite{DaniThomas:divcox, Levcovitz:CFSdiv}, it is shown that $W_\Gamma$ has quadratic divergence if and only if $\Gamma$ is a $\CFS$ graph. Further in \cite{BFRHS, BFRS}, the author, with Behrstock, Falgas-Ravry and Hagen determined that the threshold for $\CFS$ in random graphs is $\sqrt{\sqrt{6}-2}n^{-1/2}$, which is much smaller than $\sqrt{\frac{\log{n}}{n}}$. The above results (along with Corollary~\ref{cor:noRAAG} below) yields the following.
 
 \begin{cor} For $\epsilon>0$, if $(\sqrt{\sqrt{6}-2}+\epsilon)n^{-1/2}<p(n)<(1-\epsilon)\sqrt{\frac{\log{n}}{n}}$, a random graph $\Gamma\in\G(n,p)$ a.a.s. is $\CFS$, but contains a Morse $k$--cycle with $k\ge 4$. In particular, $W_\Gamma$ has quadratic divergence and is \emph{not} quasi-isometric to a right-angled Artin group.
 
Moreover, for $\left(\sqrt{\frac{1}{2}}+\epsilon\right)\sqrt{\frac{\log{n}}{n}} < p(n) < \left(1-\epsilon\right)\sqrt{\frac{\log{n}}{n}}$, $W_\Gamma$ has quadratic divergence, all hyperbolic Morse special subgroups are virtually free, and $W_\Gamma$ is \emph{not} quasi-isometric to a right-angled Artin group. \end{cor}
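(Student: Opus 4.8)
The plan is to assemble this corollary from the results already stated, so that the work reduces to bookkeeping with the ``a.a.s.'' quantifier together with careful tracking of how the two density windows nest. First I would record that $np\to\infty$ holds throughout both ranges: in the first, $p>(\sqrt{\sqrt6-2}+\epsilon)n^{-1/2}$ forces $np>(\sqrt{\sqrt6-2}+\epsilon)\sqrt n\to\infty$, and in the second, $p>(\sqrt{1/2}+\epsilon)\sqrt{\log n/n}$ forces $np>(\sqrt{1/2}+\epsilon)\sqrt{n\log n}\to\infty$. Since $n^{-1/2}=o\!\left(\sqrt{\log n/n}\right)$ and $\sqrt{1/2}+\epsilon<1-\epsilon$ once $\epsilon$ is small, for all large $n$ the second window is nonempty and lies inside the first; in particular it lies above $(\sqrt{\sqrt6-2}+\epsilon)n^{-1/2}$.

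For the first statement I would fix $p$ in the stated range and combine four inputs. Since $p$ lies above the $\CFS$ threshold $\sqrt{\sqrt6-2}\,n^{-1/2}$ of \cite{BFRHS, BFRS} and well below the density at which induced $4$--cycles begin to vanish, that threshold theorem gives that a.a.s.\ $\Gamma$ is $\CFS$; by \cite{DaniThomas:divcox, Levcovitz:CFSdiv} this is equivalent to $W_\Gamma$ having quadratic divergence. Since $p<(1-\epsilon)\sqrt{\log n/n}$ and $np\to\infty$, Corollary~\ref{cor:disconnect} gives that a.a.s.\ $\square(\Gamma)$ has an isolated vertex, i.e.\ an induced $4$--cycle not linked in $\square(\Gamma)$ to any other square; this is the asserted Morse $k$--cycle with $k\ge4$, and in the sub-window $p<(\sqrt{1/2}-\epsilon)\sqrt{\log n/n}$ Theorem~\ref{thm:noRAAG} upgrades it to a genuine Morse hyperbolic surface subgroup, so one may take $k\ge5$ there. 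Finally, $p<(1-\epsilon)\sqrt{\log n/n}$ with $np\to\infty$ are exactly the hypotheses of Corollary~\ref{cor:noRAAG}, giving that a.a.s.\ $W_\Gamma$ is not quasi-isometric to a right-angled Artin group. Intersecting these finitely many a.a.s.\ events proves the first statement.

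For the second statement the argument is the same with the range now $(\sqrt{1/2}+\epsilon)\sqrt{\log n/n}<p<(1-\epsilon)\sqrt{\log n/n}$: this window lies above the $\CFS$ threshold, so as above a.a.s.\ $\Gamma$ is $\CFS$ and $W_\Gamma$ has quadratic divergence; it satisfies $p>(\sqrt{1/2}+\epsilon)\sqrt{\log n/n}$, so Corollary~\ref{cor:specialstable} gives that a.a.s.\ every hyperbolic Morse special subgroup of $W_\Gamma$ is virtually free; and it satisfies $p<(1-\epsilon)\sqrt{\log n/n}$ with $np\to\infty$, so Corollary~\ref{cor:noRAAG} again gives that a.a.s.\ $W_\Gamma$ is not quasi-isometric to a right-angled Artin group. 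I would then intersect these three a.a.s.\ events.

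The one point that needs genuine care, and which I expect to be the main obstacle, is the validity of the $\CFS$ conclusion across the \emph{entire} window up to $(1-\epsilon)\sqrt{\log n/n}$ rather than merely just above the threshold: since $\CFS$ is not a monotone graph property, ``above the threshold'' does not by itself yield ``$\CFS$ a.a.s.\ for all larger $p$ as well.'' I would resolve this either by invoking the form of the result in \cite{BFRHS, BFRS} that already asserts $\CFS$ a.a.s.\ throughout this range of densities, or, failing that, by observing that for $p\le(1-\epsilon)\sqrt{\log n/n}$ one has $np^2=O(\log n)$, so induced $4$--cycles and the spanning component of $\square(\Gamma)$ persist in abundance, and redoing the relevant first- and second-moment estimates there. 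Everything else is routine combination of the cited a.a.s.\ statements.
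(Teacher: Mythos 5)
Your proposal is correct and matches the paper's intended argument: the corollary is stated without proof as a direct assembly of the $\CFS$ threshold theorem of \cite{BFRHS, BFRS}, the equivalence with quadratic divergence from \cite{DaniThomas:divcox, Levcovitz:CFSdiv}, Proposition~\ref{prop:Morsesquare}/Corollary~\ref{cor:disconnect}, Corollary~\ref{cor:specialstable}, and Corollary~\ref{cor:noRAAG}, exactly as you combine them. Your worry about non-monotonicity of $\CFS$ is legitimate but already resolved by the form of the theorem in \cite{BFRHS, BFRS}, which asserts $\CFS$ a.a.s.\ for all densities above the threshold (up to the relevant upper regime), so no new moment computation is needed.
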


  In particular, there are infinitely many right-angled Coxeter groups with quadratic divergence that are not quasi-isometric to any right-angled Artin group, which was previously suggested but unknown. Indeed, there are infinitely many right-angled Coxeter groups with quadratic divergence whose only hyperbolic Morse special subgroups are virtually free, but the group is still not quasi-isometric to a right-angled Artin group. This phenomenon was first oberseved \cite{GKLS}, where a single example is given. This suggests that the problem of determining precisely when a right-angled Coxeter groups is quasi-isometric to a right-angled Artin groups is incredibly delicate (\emph{c.f.} \cite{NT:RACG}).

\section*{Acknowledgements} Thanks to Annette Karrer for reading drafts of this paper and suggesting edits which clarified the statements of the main results. This work was supported by NSF grant DMS-2137608.
 
	\section{Preliminaries and notation}

\subsection{Graph Theory Background}
	\begin{defn} Given a set $S$, by $S^{(2)}$ we mean the set of subsets of $S$ of size precisely $2$, in other words, the set of unordered pairs of distinct elements of $S$.
	A simplicial graph $\Gamma$ is a pair $\Gamma = (V,E)$, were $V$ is the set of \emph{vertices} and $E\subset V^{(2)}$ is the set of \emph{edges}. If ${v,w}\in E$, then we say that the vertices $v$ and $w$ are \emph{adjacent} or \emph{joined by an edge}.\end{defn}

We will commonly confuse $\Gamma$ with its vertex set $V$, and say $v\in \Gamma$ to mean that $v\in V$.

	\begin{defn} Let $\Gamma = (V,E)$ be a graph. A subgraph of $\Lambda\subseteq\Gamma$ is a pair $(V(\Lambda), E(\Lambda))$, where $V(\Lambda)\subseteq V$ and $E(\Lambda)\subseteq E$. We say that $\Lambda$ is \emph{induced} if $E(\Lambda)=E\cap \Lambda^{(2)}$.
		
	Given a vertex $v\in V$ the \emph{link} of $v$ in $\Gamma$, denoted $\link(v)$ or $\link_\Gamma(v)$, is the subgraph of $\Gamma$ induced by the vertices adjacent to $v$.

A $k$--cycle (or $k$--gon) in $\Gamma$ is a subgraph $C$ where $V(C)=\{v_1, \ldots, v_k\}$ and $E(C)=\{\{v_i, v_{i+1}\}: 1\le i\le k-1\}\cup\{v_k, v_1\}$\end{defn}

	We will be interested in 5--cycles in a random graph $\Gamma \in \G(n,p)$. An induced $k$--cycle in a graph $\Gamma$ corresponds to a subgroup of $W_\Gamma$ which is virtually a surface group. If $k=4$, this subgroup is virtually $\mathbb{Z}^2$, while if $k\ge 5$, then the corresponding surface is hyperbolic (indeed, the subgroup is cocompact Fuchsian). Thus an induced $k$--cycle for $k\ge 5$ corresponds to a convex subcomplex in the Davis complex of $W_\Gamma$ which is quasi-isometric to $\mathbb{H}^2$. Without additional conditions on the cycles, however, not all such surface subgroups are well-behaved.
	
	\begin{defn}We say that a subgraph $\Lambda\subseteq \Gamma$ is \emph{Morse} if
	\begin{itemize}
	\item $\Lambda$ is an induced subgraph, and
	\item Whenever $C$ is an induced $4$--cycle in $\Gamma$ with $C\cap \Lambda$ containing two non-adjacent vertices, then $C\subseteq \Lambda$
	\end{itemize}

Further, if $\Lambda$ is a $k$--cycle, we will say that $\Lambda$ is a Morse $k$--cycle.
 \end{defn}

\begin{figure}[h]
	\centering
\subfloat[Non-Morse]{\begin{tikzpicture}[scale=0.75,every node/.style={fill=black,circle,inner sep=0.1cm}]
\node(a) at (18:2){};
\node(b) at (90:2){};
\node(c) at (162:2){};
\node(d) at (234:2){};
\node(e) at (306:2){};
\node[fill=none, draw=black](f) at (0,0){};
\draw (a)--(b)--(c)--(d)--(e)--(a);
\draw[dashed] (a)--(f)--(c);
\end{tikzpicture}} \hspace{1cm} \subfloat[Morse]{\begin{tikzpicture}[scale=0.75,every node/.style={fill=black,circle,inner sep=0.1cm}]
	\node(a) at (18:2){};
	\node(b) at (90:2){};
	\node(c) at (162:2){};
	\node(d) at (234:2){};
	\node(e) at (306:2){};
	\node[fill=none, draw=black](f) at (0,0){};
	\draw (a)--(b)--(c)--(d)--(e)--(a);
	\draw[dashed] (a)--(f)--(b);
\end{tikzpicture}}
\caption{Non-Morse and Morse $5$--cycles}\label{fig:5cycles}	
\end{figure}
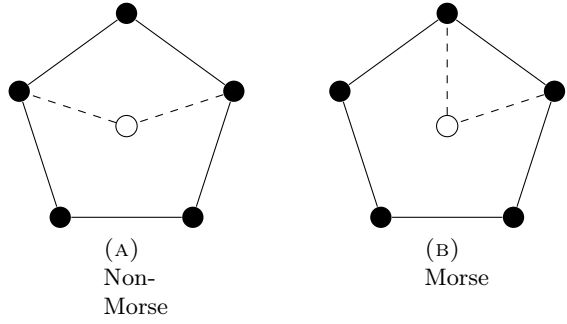

	This will be the key notion studied in the Subsection~\ref{subsec:groups} and Section~\ref{sec:proofs}. Note that in \cite{GKLS}, non-Morse cycles are called \emph{burst}.
	
	\subsection{Probability Notation and Background}
	
	By $\Gamma\in\G(n,p)$, we will mean that $\Gamma$ is random graph with $n$ vertices where two vertices are connected by an edge with probability $p$, independently of all other pairs. This is commonly referred to as the Erd\H{o}s--R\'enyi model of the random graphs, and $p$ is often called the \emph{density}.
	
	Given a function $p\colon \mathbb{N}\to (0,1)$ we will say that a property $\mathcal P$ of random graphs happens asymptotically almost surely (abbreviated a.a.s.) if the probability that $\Gamma\in\G(n,p)$ has the property $\mathcal P$ tends to $1$ as $n\to \infty$, or as we will often write:
	$$\lim_{n\to\infty}\Pb(\Gamma\in\mathcal P)=1.$$
	
	We use Landau notation. For functions $f,g\colon \mathbb{N}\to \mathbb{R}$, we say that:
	
	\begin{itemize}
		\item $f=O(g)$ if there is a constant $C$ so that $f(n)\le C\cdot g(n)$ for all $n$
		\item $f=o(g)$ if $\ds\lim_{n\to\infty} \frac{f(n)}{g(n)} = 0$
		\item $f=\Omega(g)$ if $g=O(f)$
		\item $f=\omega(g)$ if $g=o(f)$
		\item $f=\Theta(g)$ if $f=O(g)$ and $g=O(f)$
	\end{itemize}
	
	\subsection{Geometric Group Theory Background}\label{subsec:groups}

For any finitely generated group, $G=\gspan{S}$, with $S$ finite and inverse closed, we can construct the Cayley graph of $G$ with respect to $S$, denoted $\Cayley{G}{S}$, whose vertices correspond to elements of $g$ and $\{g,h\}$ span an edge of $\Cayley{G}{S}$ if and only if $gs=h$ for some $s\in S$. In this way, $G$ becomes a metric space, the distance between two elements is the length of shortest edge path in $\Cayley{G}{S}$ between them (called the \emph{word metric}). Changing generating sets leads to different, but similar, metrics.

\begin{defn}Let $\lambda \ge 1$ and $\epsilon\ge 0$. Given two metric space $(X, d_X)$ and $(Y, d_Y)$, a  $(\lambda,\epsilon)$--\emph{quasi-isometric} embedding is a function $f\colon X\to  Y$ so that for every $x_1, x_2\in X$:
	$$\frac{1}{\lambda}d_X(x_1, x_2)-\epsilon \le d_Y(f(x_1),f(x_2)) \le \lambda d_X(x_1,x_2)+\epsilon.$$
	
If further $d_Y(y,f(X))<\epsilon$ for all $y\in Y$, then $f$ is called a $(\lambda, \epsilon)$--quasi-isometry, and $X$ and $Y$ are \emph{quasi-isometric}.\end{defn}

It's well-known that if $S, S'$ are finite generating sets for a group $G$, then $\Cayley{G}{S}$ and $\Cayley{G}{S'}$ are quasi-isometric. By a $(\lambda,\epsilon)$--\emph{quasi-geodesic} (or just quasi-geodesic) in $Y$, we mean a quasi-isometric embedding of an interval.
	
Recall the following from the introduction. 
\begin{defn}Given a graph $\Gamma = (V,E)$, the \emph{right-angled Coxeter group} defined by $\Gamma$ is:
	$$W_\Gamma := \gspan{V \mid v^2=1\ \forall v\in V, vw=wv \iff \{v,w\}\in E}.$$
	That is, the generators for $W_\Gamma$ are in one-to-one correspondence with the vertices of $\Gamma$, each generator has order $2$, and two generators commute if and only if the corresponding vertices are adjacent in $\Gamma$.\end{defn}

Critically, $W_\Gamma\cong W_\Lambda$ if and only if $\Gamma\cong \Lambda$ \cite{Radcliffe:rigidity}. Further, if $\Lambda \subseteq \Gamma$ is an \emph{induced} subgraph, then $V(\Lambda)$ generates a subgroup isomorphic to $W_\Lambda$. Subgroups of this form are called \emph{special subgroups}.

Special subgroups of right-angled Coxeter groups are quasi-isometrically embedded, in fact, in the standard generating sets the embedding is isometric (and convex). The next set of definitions describes behavior in a metric space that is more restricted than quasi-isometric embedding that is motivated by the study of hyperbolic groups. The following notion of a \emph{Morse} (also called \emph{strongly quasiconvex}) subgroup was introduced by Tran in \cite{Tran:quasiconvex} and Genevois in \cite{Genevois:Hyperbolicities}.

\begin{defn} Let $N\colon [1,\infty)\times [0,\infty)\to [0,\infty)$ be a function (called a Morse gauge). Let $X$ be a metric space and $Y\subseteq X$. We say that $S$ is \emph{Morse} if the inclusion $Y\hookrightarrow X$ is a quasi-isometric embedding and for every $(\lambda,\epsilon)$--quasigeodesic $\gamma$ in $X$ with endpoints $x, y\in Y$, $\gamma$ stays in the $N(\lambda, \epsilon)$ neighborhood of $Y$.

If $X$ is a Cayley graph of a finitely generated group $G$, and $Y$ is the set of vertices corresponding to a subgroup $H$, we say that $H\le G$ is a \emph{Morse subgroup}. If $Y$ is a quasi-geodesic, we say that $Y$ is $N$--Morse.\end{defn}

Morse subgroups play an important role in understanding the space of ``hyperbolic--like" directions in a group. There is also a closely related notion of \emph{stable subgroups}, introduced by Durham and Taylor in \cite{DT:stable}. For finitely generated groups, a subgroup is stable if it is Morse and hyperbolic \cite{Tran:quasiconvex}. 

The connection between Morse geodesics and hyperbolic directions is formalized by the following construction and results due to Cordes \cite{Cordes:Morse}, inspired by similar work of Charney and Sultan \cite{CharneySultan}.

\begin{defn}[Morse Boundary] Given a proper metric space $X$, we will say that two geodesics $\alpha,\beta \colon [0,\infty)\to X$ are equivalent if there is a constant $k$ so that $d(\alpha(t),\beta(t))\le K$ for all $t$. We denote the equivalence class of $\beta$ by $[\beta]$.
	
Let $p\in X$ and let $N\colon [1,\infty)\times [0,\infty)\to [0,\infty)$. We set
	
	$$\partial_M^N(X)_p = \{[\beta]: \exists \alpha\in [\beta] \text{ with } \alpha\ \text{$N$--Morse, }\alpha(0)=p\}$$
We give $\partial_M^N(X)_p$ topology of uniform convergence of representatives on compact subsets of $X$. 

Then the \emph{Morse boundary} of $X$ (at $p$) is:
$$\partial_M(X)_p = \lim_{\longrightarrow} \partial_M^N(X)_p.$$
\end{defn}

\begin{prop}\cite{Cordes:Morse}\label{prop:Morse} Let $X,Y$ be proper geodesic metric spaces and $p,q\in X$. Then:
	
	\begin{enumerate}
		\item $\partial_M(X)_p \cong \partial_M(X)_q$
		\item if $f\colon X\to Y$ is a quasi-isometry, then $\partial_M(X)_p\cong\partial_M(Y)_{f(p)}$
		\item\cite{Tran:quasiconvex} if $Y\subseteq X$ is Morse, and $p\in Y$, then the inclusion of $Y$ in $X$ induces an inclusion:
		$$\iota_M\colon \partial_M(Y)_p \hookrightarrow \partial_M(X)_p.$$
	\end{enumerate}
 \end{prop}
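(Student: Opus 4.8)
The plan is to realize each of the three statements by an explicit comparison map between Morse boundaries, built from two standing tools. The first is the \emph{Morse Lemma}: for every Morse gauge $N$ and every pair $(\lambda,\epsilon)$ there is a constant $D=D(N,\lambda,\epsilon)$ so that any $(\lambda,\epsilon)$--quasigeodesic with endpoints on an $N$--Morse geodesic lies in the $D$--neighborhood of it, any two such quasigeodesics with common endpoints are within Hausdorff distance $D$, every subsegment of an $N$--Morse geodesic is again $N$--Morse, and any geodesic ray at bounded Hausdorff distance from an $N$--Morse ray is $N'$--Morse for some $N'=N'(N)$. The second tool is the Arzel\`a--Ascoli theorem, available because $X$ and $Y$ are proper: any family of geodesic segments sharing a basepoint, with lengths tending to infinity, subconverges uniformly on compacta to a geodesic ray. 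For (1), given an $N$--Morse ray $\alpha$ with $\alpha(0)=p$, let $\alpha'$ be such a subsequential limit of the segments $[q,\alpha(m)]$; the Morse Lemma places these uniformly close to $\alpha$, so $\alpha'$ is at bounded Hausdorff distance from $\alpha$ and hence $N'$--Morse. The assignment $[\alpha]_p\mapsto[\alpha']_q$ is independent of all choices (two candidates each fellow-travel $\alpha$, hence each other, hence are asymptotic), injective (if two outputs agree at infinity the two inputs are at finite Hausdorff distance and, being $N$--Morse, asymptotic), surjective (run the construction from $q$), carries $\partial_M^N(X)_p$ into $\partial_M^{N'}(X)_q$ compatibly over refinements of gauge, and is continuous for uniform convergence on compacta; passing to the direct limit gives the homeomorphism.

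For (2), I would first establish a purely coarse fact: a $(\mu,c)$--quasi-isometry $f\colon X\to Y$ carries an $N$--Morse quasigeodesic to an $N^\sharp$--Morse quasigeodesic with $N^\sharp=N^\sharp(N,\mu,c)$. Given a quasigeodesic $\sigma$ of $Y$ with endpoints on $f\circ\gamma$, push it back by a coarse inverse $\bar f$ and correct its endpoints to obtain a quasigeodesic of $X$ with endpoints on $\gamma$; $N$--Morseness of $\gamma$ keeps it near $\gamma$, hence its image under $f$, which boundedly tracks $\sigma$, stays near $f\circ\gamma$, and so does $\sigma$. Then for an $N$--Morse geodesic ray $\alpha$ from $p$ in $X$, replace the $N^\sharp$--Morse quasigeodesic $f\circ\alpha$ by an Arzel\`a--Ascoli limit $\beta$ of the segments $[f(p),f(\alpha(m))]$, which is $N^\flat$--Morse with $N^\flat=N^\flat(N,\mu,c)$, and set $\partial f([\alpha]_p)=[\beta]_{f(p)}$. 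Well-definedness, injectivity, continuity, and control of strata are checked as in (1), and the map induced by a coarse inverse $\bar f$ of $f$ is, by the Morse Lemma, inverse to $\partial f$ on each stage; direct limits give the homeomorphism. Note that (1) is not a formal consequence of (2), so the direct argument above is genuinely needed.

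For (3), the crux is that if $Y\subseteq X$ is $N$--Morse --- so in particular $Y\hookrightarrow X$ is a $(\mu,c)$--quasi-isometric embedding --- then an $M$--Morse geodesic ray $\gamma$ of $Y$ based at $p$ is, viewed in $X$, a quasigeodesic ray that is $M'$--Morse in $X$ for some $M'=M'(M,N,\mu,c)$. Indeed, for a $(\lambda,\epsilon)$--quasigeodesic $\sigma$ of $X$ with endpoints on $\gamma\subseteq Y$, Morseness of $Y$ puts $\sigma$ in the $N(\lambda,\epsilon)$--neighborhood of $Y$, so there is a path $\sigma'$ in $Y$ at bounded Hausdorff distance from $\sigma$ with the same endpoints; comparing the restriction of $d_X$ to $Y$ with $d_Y$ via the embedding, $\sigma'$ is a quasigeodesic of $Y$ with controlled constants, so $M$--Morseness of $\gamma$ keeps $\sigma'$, and hence $\sigma$, close to $\gamma$. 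Then $\iota_M$ sends $[\gamma]_p\in\partial_M(Y)_p$ to the class in $\partial_M(X)_p$ of an Arzel\`a--Ascoli limit of $[p,\gamma(m)]$ formed inside $X$; injectivity holds because Hausdorff-closeness in $X$ of two such images pulls back through the embedding to Hausdorff-closeness --- hence, for Morse rays, equivalence --- in $Y$, and continuity into $\partial_M^{M'}(X)_p$ compatibly with the direct limit is as before.

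The main obstacle throughout is the uniformity bookkeeping: every derived Morse gauge ($N'$, $N^\sharp$, $N^\flat$, $M'$) must depend only on the input gauge together with the ambient quasi-isometry constants, never on the particular ray --- this is exactly what lets the comparison maps be defined stagewise on the strata $\partial_M^N$ and then passed to the direct limit, and what upgrades them from bijections to homeomorphisms. A secondary, more delicate, point is verifying continuity for the direct-limit topology: one must show that a convergent sequence of Morse classes in $\partial_M(X)_p$ admits representatives lying in a single stratum $\partial_M^N$ and converging uniformly on compacta, and that the comparison maps respect this --- and here again the uniform fellow-traveling estimates of the Morse Lemma are what close the argument.
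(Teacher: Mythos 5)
The paper gives no proof of this proposition --- it is quoted verbatim from Cordes \cite{Cordes:Morse} (parts (1) and (2)) and Tran \cite{Tran:quasiconvex} (part (3)) --- so there is nothing in the paper to compare against. Your outline correctly reconstructs the arguments in those references: Arzel\`a--Ascoli limits of geodesic segments, the uniform Morse-lemma fellow-traveling estimates, and stagewise control of the gauges $N'$, $N^\sharp$, $N^\flat$, $M'$ before passing to the direct limit is exactly how Cordes and Tran proceed, and the two delicate points you flag (gauge uniformity and continuity in the direct-limit topology) are the right ones. The only small step left implicit is in (1): the segments $[q,\alpha(m)]$ have an endpoint off $\alpha$, so one first concatenates $[q,p]$ with $\alpha|_{[0,m]}$ to produce a quasigeodesic with the same endpoints before invoking the Morse property of $\alpha$.
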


For a right-angled Coxeter group, determining when a special subgroup is Morse is particularly simple.

 	\begin{prop}[\cite{Genevois:Hyperbolicities,RST:convexity}]\label{thm:Morsebdypentagon} Let $\Gamma$ be a simplicial graph and $\Lambda$ an induced subgraph of $\Gamma$. Then the special subgroup $W_\Lambda \le W_\Gamma$ is Morse if and only if $\Lambda$ is a Morse subgraph of $\Gamma$.\end{prop}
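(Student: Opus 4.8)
The plan is to carry out everything inside the Davis complex $\Sigma_\Gamma$, a $\mathrm{CAT}(0)$ cube complex on which $W_\Gamma$ acts properly and cocompactly and whose $1$-skeleton is the Cayley graph on the standard generators, with cubes spanned by commuting tuples (cliques of $\Gamma$). A special subgroup $W_\Lambda$ stabilises the convex subcomplex $\Sigma_\Lambda$ it spans; since $\Lambda$ is induced, $\Sigma_\Lambda$ is genuinely convex, so the inclusion $W_\Lambda\hookrightarrow W_\Gamma$ is automatically an isometric convex embedding and the only thing at issue is the quasigeodesic fellow-travelling condition. The key geometric feature is that the ``orthogonal directions'' to a convex subcomplex of a right-angled Coxeter group are governed by induced $4$-cycles: an induced $4$-cycle $C$ on vertices $a,b,c,d$ (edges $ab,bc,cd,da$, while $a\not\sim c$ and $b\not\sim d$) generates $W_C\cong D_\infty\times D_\infty$, a convex flat isometric to the standard square grid.

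For the forward implication I would prove the contrapositive. If $\Lambda$ is not a Morse subgraph then, since it is induced, some induced $4$-cycle $C$ on $a,b,c,d$ has a non-adjacent pair inside $\Lambda$, say $\{a,c\}\subseteq\Lambda$, while (say) $b\notin\Lambda$. The line $\langle a,c\rangle\cong D_\infty$ lies in $W_\Lambda$, and inside the flat $W_C$ the three-segment path $1\to(bd)^n\to(bd)^n(ac)^n\to(ac)^n$, each segment of combinatorial length $2n$, is a quasigeodesic with uniform constants (a $(3,0)$-quasigeodesic, as one checks in the grid $D_\infty\times D_\infty$) joining the points $1$ and $(ac)^n$ of $W_\Lambda$. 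Its middle vertex $w=(bd)^n(ac)^n$ has a reduced word with exactly $n$ occurrences of $b$ (in a right-angled Coxeter group all reduced words for a given element have the same letter-multiset, since they differ only by commutation swaps), so any geodesic from $1$ to $w$ crosses exactly $n$ distinct $b$-labelled hyperplanes; none of these meets $\Sigma_\Lambda$, because $b\notin\Lambda$, so each has all of $W_\Lambda$ in the halfspace of $1$ and $w$ in the other, whence $\mathrm{d}(w,W_\Lambda)\ge n$. Letting $n\to\infty$, no Morse gauge can work and $W_\Lambda$ is not Morse.

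For the converse I would invoke the structure theory that pins down exactly when a convex subcomplex --- equivalently, a hierarchically quasiconvex subgroup --- of such a cube complex is Morse. In detail: $W_\Gamma$ is a hierarchically hyperbolic group whose domains are represented by standard cosets of special subgroups $W_\mu$, with $W_\mu\perp W_\nu$ when $\mu$ and $\nu$ span a join in $\Gamma$; by the Russell--Spriano--Tran characterisation, the hierarchically quasiconvex subgroup $W_\Lambda$ is Morse if and only if there is no domain $U$ on which $W_\Lambda$ projects unboundedly together with an infinite domain $V\perp U$ on which $W_\Lambda$ projects boundedly. Unwinding the dictionary, such an obstruction is precisely a non-adjacent pair $\{a,c\}\subseteq\Lambda$ and a non-adjacent pair $\{b,d\}\not\subseteq\Lambda$ with $\{a,c\}\ast\{b,d\}$ a join --- that is, precisely an induced $4$-cycle $C$ with $C\cap\Lambda$ containing two non-adjacent vertices but $C\not\subseteq\Lambda$. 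Hence $\Lambda$ being a Morse subgraph is equivalent to the absence of any obstruction, which is equivalent to $W_\Lambda$ being Morse. (Alternatively, one can run Genevois's direct hyperplane-theoretic criterion for a convex subcomplex to be Morse and translate it into the same condition on $\Gamma$.)

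The forward direction is an explicit detour construction and is routine. The heart of the matter, and the main obstacle, is the converse: the fact that flat (product) regions orthogonal to $\Sigma_\Lambda$ are the \emph{only} mechanism by which a quasigeodesic with endpoints in $W_\Lambda$ can leave every bounded neighbourhood of $W_\Lambda$. Establishing this requires genuine input --- either the hierarchically hyperbolic machinery (the distance formula, the Russell--Spriano--Tran characterisation, and the identification of right-angled Coxeter groups as HHGs with the join-orthogonality structure above), or the equivalent combinatorial analysis of which hyperplanes ``graze'' a convex subcomplex in the sense of Genevois --- after which the reduction to the stated graph-theoretic condition is the elementary remark that the minimal bad configuration is an induced square.
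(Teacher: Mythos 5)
The paper does not prove this proposition at all: it is imported wholesale from the cited references \cite{Genevois:Hyperbolicities,RST:convexity}, so there is no in-paper argument to compare against. Judged on its own terms, your proposal is a reasonable reconstruction of how the result is actually established. Your forward direction is complete and correct: the three-sided detour $1\to(bd)^n\to(bd)^n(ac)^n\to(ac)^n$ around the convex flat $\Sigma_C\cong$ the square grid is indeed a $(3,0)$-quasigeodesic with endpoints in $W_\Lambda$, and the wall-counting argument (the $n$ walls labelled $b$ separating $1$ from the midpoint do not cross $\Sigma_\Lambda$ since $b\notin\Lambda$, hence each separates the midpoint from all of $W_\Lambda$) correctly gives $d(w,W_\Lambda)\ge n$, defeating any Morse gauge. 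For the converse you do not give a proof but correctly identify it with the Russell--Spriano--Tran orthogonal projection dichotomy (or Genevois's hyperplane criterion), which is exactly the content of the papers the proposition is attributed to; that is a legitimate reduction, though you should be aware that you are citing the hard direction rather than proving it, just as the paper does. The one step I would press you on is the ``unwinding the dictionary'' sentence: the dichotomy is about projections to \emph{all} orthogonal pairs of domains, and extracting from a failure of the dichotomy a \emph{single} induced square $C$ with $\{a,c\}\subseteq\Lambda$ non-adjacent and $C\not\subseteq\Lambda$ (and, conversely, checking that such a square really does give an infinite orthogonal domain onto which $W_\Lambda$ projects \emph{boundedly}, e.g.\ via the gate of $\Sigma_\Lambda$ onto $\Sigma_{\{b,d\}}$ being $\{1,d\}$ when $b\notin\Lambda$) deserves an explicit argument rather than the word ``precisely.'' As written this is a gloss, not a gap that would sink the proof, since the cited sources carry it out.
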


	In \cite{CCS:Morse} it is shown that the Morse boundary of any right-angled Artin group is totally disconnected. Using this, as well as  Propositiona~\ref{prop:Morse} and~\ref{thm:Morsebdypentagon}, we see the following.
	
	\begin{cor}\label{cor:qiRAAG} Suppose $\Gamma$ contains a Morse $k$--cycle for $k\ge 5$. Then $W_\Gamma$ is not quasi-isometric to a right-angled Artin group.
	
Further, if $W_\Gamma$ is one-ended and $\Gamma$ contains a Morse $4$--cycle, then $W_\Gamma$ is not quasi-isometric to a right-angled Artin group.\end{cor}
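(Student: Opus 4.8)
The plan is to deduce the non-existence of a quasi-isometry to a right-angled Artin group from two facts already available: the Morse boundary is a quasi-isometry invariant (Proposition~\ref{prop:Morse}(2)), and the Morse boundary of every right-angled Artin group is totally disconnected \cite{CCS:Morse}. So in each case it suffices to find inside $W_\Gamma$ some Morse-geometric feature that no right-angled Artin group can carry: an embedded circle in $\partial_M(W_\Gamma)$ when $k\ge 5$, and a ``Morse flat'' when $k=4$.

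Suppose first $k\ge 5$ and let $C\subseteq\Gamma$ be a Morse $k$-cycle. By Proposition~\ref{thm:Morsebdypentagon} the special subgroup $W_C\le W_\Gamma$ is Morse. Since $k\ge 5$, $W_C$ is a cocompact Fuchsian group (as recorded in the preliminaries); in particular it is hyperbolic, hence a stable subgroup, and its Morse boundary coincides with its Gromov boundary, a circle. Proposition~\ref{prop:Morse}(3) then supplies an embedding $S^1\cong\partial_M(W_C)\hookrightarrow\partial_M(W_\Gamma)$, so $\partial_M(W_\Gamma)$ is not totally disconnected. If $W_\Gamma$ were quasi-isometric to some $A_\Lambda$, Proposition~\ref{prop:Morse}(2) would give a homeomorphism $\partial_M(W_\Gamma)\cong\partial_M(A_\Lambda)$, contradicting \cite{CCS:Morse}. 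This is essentially bookkeeping on top of the two propositions and \cite{CCS:Morse}.

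Now suppose $k=4$ and $W_\Gamma$ is one-ended, and let $C\subseteq\Gamma$ be a Morse $4$-cycle. Again $W_C\le W_\Gamma$ is Morse by Proposition~\ref{thm:Morsebdypentagon}, but now $W_C\cong D_\infty\times D_\infty$ is virtually $\mathbb{Z}^2$ and has empty Morse boundary, so the previous argument says nothing; here is where the real work lies. The approach I would take: Morse-ness of a subset is preserved up to bounded Hausdorff distance by quasi-isometries, so if $W_\Gamma$ were quasi-isometric to a right-angled Artin group $A_\Lambda$, then $A_\Lambda$ would contain a Morse subset quasi-isometric to the Euclidean plane. Since $W_\Gamma$ is one-ended, so is $A_\Lambda$, hence $\Lambda$ is connected and $A_\Lambda$ is freely indecomposable; I would then invoke the structure theory of strongly quasiconvex subsets and of quasiflats in right-angled Artin groups (\cite{Genevois:Hyperbolicities, RST:convexity}) to conclude that a freely indecomposable right-angled Artin group not isomorphic to $\mathbb{Z}^2$ admits no Morse quasiflat: a standard flat spanned by an edge $\{a,b\}$ of $\Lambda$ can always be escaped by straying in the direction of a vertex $c$ adjacent to $a$ or $b$ while travelling parallel along $\langle a\rangle$ or $\langle b\rangle$, and every quasiflat lies within bounded distance of a standard one. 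This forces $A_\Lambda\cong\mathbb{Z}^2$, i.e. $W_\Gamma$ virtually $\mathbb{Z}^2$, i.e. $\Gamma=C_4$ — the one genuine exception to the statement, and one never encountered in the applications below — and otherwise we reach a contradiction.

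The $k=4$ case is therefore the main obstacle. One must use one-endedness twice: first to pass to a one-ended, hence freely indecomposable, target $A_\Lambda$ (free factors isomorphic to $\mathbb{Z}^2$ \emph{are} Morse, so this step is essential), and then to exploit the combinatorics of quasiflats in CAT(0) cube complexes to see that a standard flat cannot be Morse. Pinning down the precise statements in \cite{Genevois:Hyperbolicities, RST:convexity} that yield ``no Morse quasiflat'', and cleanly dealing with the degenerate graph $C_4$, is where care is needed; I would quote those results rather than reprove them.
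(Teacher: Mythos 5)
Your argument for $k\ge 5$ is the paper's argument: $W_C$ is Morse by Proposition~\ref{thm:Morsebdypentagon}, its Morse boundary is a circle, Proposition~\ref{prop:Morse}(3) embeds that circle into $\partial_M(W_\Gamma)$, and quasi-isometry invariance of the Morse boundary together with total disconnectedness of Morse boundaries of right-angled Artin groups \cite{CCS:Morse} finishes.

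For $k=4$ your skeleton also matches the paper's: transport the Morse, virtually $\mathbb{Z}^2$ subgroup $W_C$ through the hypothetical quasi-isometry, observe that the target $A_\Lambda$ must be one-ended, and invoke the classification of Morse subsets of one-ended right-angled Artin groups. Where the paper simply cites \cite[Corollary 7.4(d)]{RST:convexity} (also \cite{Genevois:Hyperbolicities, Tran:quasiconvex}) and stops, you propose to partially re-derive that classification via quasiflat rigidity; that is far more machinery than needed --- you only have to rule out the single non-hyperbolic Morse subset you produced, so you should quote the classification rather than reprove it, and your escape-direction sketch plus ``every quasiflat is close to a standard flat'' is not a proof (the latter is itself a hard theorem). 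The one substantive point where you diverge from the paper is your flagged exception, and there you are right: the classification only excludes non-hyperbolic Morse subsets that are not coarsely dense, so the argument in fact only forces $A_\Lambda\cong\mathbb{Z}^2$, i.e.\ $W_\Gamma$ virtually $\mathbb{Z}^2$. The second half of the corollary genuinely fails in that degenerate case: $\Gamma=C_4$ has a (vacuously) Morse $4$-cycle, $W_{C_4}$ is one-ended, and it \emph{is} quasi-isometric to the right-angled Artin group $\mathbb{Z}^2$; note also that the exceptional class is ``$W_\Gamma$ virtually $\mathbb{Z}^2$'' rather than literally $\Gamma=C_4$. The paper's two-line proof silently elides this case. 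It is harmless for every application in the paper, since a random graph with $np\to\infty$ is a.a.s.\ not of this form, but your version, which carries the exception explicitly, is the more accurate account of what the argument actually proves.
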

	
	\begin{proof} Let $C$ be a Morse $k$--cycle with $k\ge 5$. Then, since $C$ is induced, the special subgroup $W_C\le W_\Gamma$ is quasi-isometrically embedded and virtually a hyperbolic surface group. 	Further, by Proposition~\ref{thm:Morsebdypentagon} $W_C$ is Morse in $W_\Gamma$. Since $W_C$ is quasi-isometric to $\mathbb{H}^2$ by Proposition \ref{prop:Morse}$\partial_M(W_C)\cong \partial_M(\mathbb{H}^2)\cong S^1$.  
	
	Since $W_C$ is a Morse subgroup, by Proposition~\ref{prop:Morse}(2), $\partial_M(W_C)$ embeds in $\partial_M(W_\Gamma)$, and so $\partial_M(W_\Gamma)$ contains an embedded copy of $S^1$. Hence $\partial_M(W_\Gamma)$ is not totally disconnected. But in \cite{CCS:Morse} it is shown that the Morse boundary of any right-angled Artin group is totally disconnected. Thus, by Proposition~\ref{prop:Morse}(2) $W_\Gamma$ is not quasi-isometric to a right-angled Artin group.
		
	If $k=4$, then $W_C$ is virtually $\mathbb{Z}^2$ (thus non-hyperbolic) and Morse in $W_\Gamma$. Now suppose that $W_\Gamma$ is one-ended. Then, if it were quasi-isometric to a right-angled Artin group, that group must be one-ended as well. But in \cite[Corollary 7.4(d)]{RST:convexity} (as well as \cite{Genevois:Hyperbolicities, Tran:quasiconvex}) it is shown that the only Morse subsets of a one-ended right-angled Artin group are hyperbolic. Thus $W_\Gamma$ is not quasi-isometric to a right-angled Artin group.
	
	\end{proof}

		We should note that Morse $k$--cycles with $k\ge 5$ are not the only way that the Morse boundary of a right-angled Coxeter groups could contain a copy of $S^1$, though examples from \cite{GKLS} use Morse cycles in a finite index subgroup. In unpublished work, Tran has claimed that if $\Gamma$ is the 1--skeleton of a cube, then the Morse boundary of $W_\Gamma$ contains an embedded copy of $S^1$, but no finite index reflection subgroup has a Morse cycle in its defining graph. Further work has been done on this by Russell, Spriano and Tran, as well as Karrer, attempting to classify Morse subgroups of a right-angled Coxeter groups and give conditions under which we can guarantee that the Morse boundary is totally disconnected. 

\section{Proofs of the Main Theorems}\label{sec:proofs}

The following provides a threshold for the existence of Morse pentagons. The proof is similar to standard subgraph inclusion proofs in the Erd\H{o}s--R\'enyi model, though extra care must be taken when estimating the second moment: the strength of the stability condition makes even disjoint sets of vertices not quite independent (though very close) and so more care must be taken than in the standard argument (see \textit{e.g.}, \cite[Chapter 4]{AlonSpencer}).
 
Before the proof of the main Theorems, the following, technical Lemma will be helpful.

\begin{lem}\label{lem:linkdependent} Let $\Gamma\in\G(n,p)$ and let $v, w_1, w_2, w_3, w_4 \in \Gamma$. Then:
	
\begin{enumerate}
	\item $\Pb[v\in\link(w_1)\mid v\not\in\left(\link(w_1)\cap\link(w_2)\right)\cup\left(\link(w_1)\cap\link(w_3)\right)] = \frac{p(p-1)}{p^2-p-1} = p+o(p)$;
	\item $\Pb[v\in\link(w_1)\cap \link(w_2)\mid v\not\in \link(w_2)\cap\link(w_3)] = \frac{p^2}{1+p} = p^2+o(p^2)$;
	\item $\Pb[v\in \link(w_1)\cap\link(w_2)\mid v\not\in \left(\link(w_1)\cap\link(w_3)\right)\cup\left(\link(w_2)\cap\link(w_4)\right)] = \frac{p^2}{(1+p)^2} = p^2+o(p^2).$
\end{enumerate}
\end{lem}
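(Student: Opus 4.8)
The plan is to reduce all three statements to elementary computations with independent Bernoulli random variables. The key observation is that for distinct vertices $x,y$ the event ``$x\in\link(y)$'' is exactly the event that the edge $\{x,y\}$ is present in $\Gamma$, which has probability $p$; and for pairwise distinct $v,w_1,w_2,w_3,w_4$ the four edges $\{v,w_1\},\{v,w_2\},\{v,w_3\},\{v,w_4\}$ are themselves distinct, so their indicator random variables $E_1,E_2,E_3,E_4$ are independent $\mathrm{Bernoulli}(p)$ variables. (I would assume the five vertices are pairwise distinct, which is the only regime in which the stated formulas hold.) Each event in the lemma then translates into a Boolean combination of the $E_i$: for example ``$v\in\link(w_1)\cap\link(w_2)$'' is $E_1\cap E_2$, and ``$v\notin\link(w_2)\cap\link(w_3)$'' is the complement of $E_2\cap E_3$.

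Next I would apply $\Pb[A\mid B]=\Pb[A\cap B]/\Pb[B]$ directly in each case. For (1): conditioning on $E_1$ turns $\neg\bigl((E_1\cap E_2)\cup(E_1\cap E_3)\bigr)$ into $\neg E_2\cap\neg E_3$, so the numerator is $p(1-p)^2$, while the denominator is $1-\Pb[E_1\cap(E_2\cup E_3)]=1-p(2p-p^2)=1-2p^2+p^3$; the factorization $1-2p^2+p^3=(p-1)(p^2-p-1)$ then recovers the closed form $\tfrac{p(p-1)}{p^2-p-1}$. For (2): the numerator is $\Pb[E_1\cap E_2\cap\neg E_3]=p^2(1-p)$ and the denominator is $1-p^2$, so cancelling $(1-p)$ gives $\tfrac{p^2}{1+p}$. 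For (3): since $E_1\cap E_3$ and $E_2\cap E_4$ involve disjoint edge sets they are independent, so the denominator is $(1-p^2)^2$; conditioning on $E_1\cap E_2$ makes the numerator $\Pb[E_1\cap E_2\cap\neg E_3\cap\neg E_4]=p^2(1-p)^2$, and cancelling $(1-p)^2$ yields $\tfrac{p^2}{(1+p)^2}$.

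The asymptotic expressions then follow from Taylor expansion as $p\to0$: $\tfrac{p(p-1)}{p^2-p-1}=p\bigl(1+O(p)\bigr)=p+o(p)$, and likewise $\tfrac{p^2}{1+p}=p^2+o(p^2)$ and $\tfrac{p^2}{(1+p)^2}=p^2+o(p^2)$.

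There is no real obstacle here; the only points that require care are, first, noticing that each event depends only on the edges from $v$ to $w_1,\dots,w_4$, so that distinctness of these vertices is precisely what makes the relevant indicators independent, and second, the algebraic simplification in (1), where one has to spot the factor $(p-1)$ in $1-2p^2+p^3$ in order to match the stated rational function.
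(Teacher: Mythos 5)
Your proposal is correct and takes essentially the same approach as the paper: the paper proves only part (2) by the same direct conditional-probability computation with independent edge indicators ($\Pb[E_1\cap E_2\cap\neg E_3]/\Pb[\neg(E_2\cap E_3)]=p^2(1-p)/(1-p^2)$) and asserts the other parts are similar, which is exactly what you carry out. Your verification of the closed form in (1) via the factorization $1-2p^2+p^3=(p-1)(p^2-p-1)$ and the independence of $E_1\cap E_3$ and $E_2\cap E_4$ in (3) fills in precisely the ``similar'' details the paper omits.
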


\begin{proof} We prove (2), the remaining proofs are similar.
Fix $v\in \Gamma$ and let $L_v(x,y)$ be the event that $v\in\link(x)\cap\link (y)$. We use the following, familiar formula from elementary probability:

$$\Pb[L_v(w_1,w_2)\mid \neg L_v(w_2, w_3)] = \frac{\Pb[L_v(w_1,w_2)\land\left(\neg L_v(w_2, w_3)\right)]}{\Pb[\neg L_v(w_2, w_3)]}.$$

Clearly, $\Pb[\neg L_v(w_2, w_3)] = 1-p^2$. Now, if for $L_v(w_1, w_2)\land \left(\neg L_v(w_2, w_3)\right)$ to be true, we must have that $v$ is joined by an edge to $w_1$ and $w_2$, but not $w_3$. This occurs with probability $p^2(1-p)$.

$$\Pb[L_v(w_1,w_2)\mid \neg L_v(w_2, w_3)] = \frac{p^2(1-p)}{1-p^2}= \frac{p^2}{1+p} = p^2+o(p^2).$$
 
\end{proof}

  \begin{thm}\label{thm:pentagons} Let $\lambda = \sqrt{\ds\frac{1}{2}}$ and let $\epsilon>0$. If $np\to\infty$ and $p(n)<(\lambda - \epsilon) \sqrt{\ds\frac{\log{n}}{n}}$, then a random graph at density $p(n)$ a.a.s. contains a Morse pentagon.\end{thm}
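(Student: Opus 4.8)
The plan is to prove this by the second moment method applied to a count of a strengthened kind of Morse pentagon. Observe first that a $5$-cycle contains no induced $4$-cycle among its own vertices, so an induced pentagon $C=v_1\cdots v_5$ is Morse iff no induced $4$-cycle of $\Gamma$ shares a non-adjacent pair with $C$; and since such a $4$-cycle would have the shared non-adjacent pair $\{v_i,v_{i+2}\}$ as its diagonal, its other two vertices must be common neighbours of $v_i$ and $v_{i+2}$. I therefore pass to the cleaner notion: call $C$ \emph{super-Morse} if it is induced and, for each $i$, the only common neighbour of $v_i$ and $v_{i+2}$ in $\Gamma$ is $v_{i+1}$; equivalently, no vertex outside $C$ is adjacent to two non-adjacent vertices of $C$. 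A super-Morse pentagon is Morse, so it suffices to show that $\Gamma\in\G(n,p)$ a.a.s.\ contains one. Let $X$ count the super-Morse pentagons of $\Gamma$.

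\emph{First moment.} A fixed labelled $5$-cycle is induced with probability $p^5(1-p)^5$, and, conditionally on that, since the edges from distinct outside vertices to $V(C)$ are mutually independent, the probability that no outside vertex is adjacent to a non-adjacent pair of $C$ is exactly $\big[(1-p)^3(1+3p+p^2)\big]^{n-5}$: the per-vertex factor $(1-p)^3(1+3p+p^2)=1-5p^2+O(p^3)$ is the probability that $N(u)\cap V(C)$ is empty, a single vertex, or an edge of $C$. Hence
$$\E[X]=12\binom n5\,p^5(1-p)^5\big[(1-p)^3(1+3p+p^2)\big]^{n-5}=\Theta\!\big((np)^5\big)\,e^{-5np^2(1+o(1))}.$$
Since $np\to\infty$ and $np^2\le(\lambda-\epsilon)^2\log n=(\tfrac12-2\lambda\epsilon+\epsilon^2)\log n$ with $(\lambda-\epsilon)^2<\tfrac12$, one checks that $\log\E[X]=5\log(np)-5np^2(1+o(1))+O(1)\to\infty$. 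This is exactly where the threshold $\lambda=\sqrt{1/2}$ enters (and above it $\E[X]\to0$).

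\emph{Second moment.} Write $\E[X^2]=\sum_{C,C'}\Pb[C,C'\text{ super-Morse}]$ and group by $k=|V(C)\cap V(C')|$. The diagonal $k=5$ contributes $O(\E[X])=o(\E[X]^2)$. When $k=0$ the two events interact only through the $25$ potential edges between $C$ and $C'$, which (as $p\to0$) are typically all absent; controlling the relevant conditional probabilities with Lemma~\ref{lem:linkdependent} gives $\Pb[C,C'\text{ super-Morse}]=\E[X]^2 n^{-10}(1+O(p^2))$ uniformly, so the $k=0$ terms sum to $\E[X]^2(1+o(1))$. The case $k=4$ is vacuous: four shared vertices induce a path $P_4$ in both pentagons, so the fifth vertex of $C'$ is adjacent to the two endpoints of that $P_4$, which form a non-adjacent pair of $C$, contradicting that $C$ is super-Morse; hence the probability is $0$. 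For $k\in\{1,2,3\}$, put $q=\Pb[C\text{ super-Morse}]$ so that $\E[X]^2=\Theta(n^{10}q^2)$. There are $\Theta(n^{10-k})$ pairs of a given overlap type, each having $e_0\le k-1$ shared pentagon edges, and I bound $\Pb[C,C'\text{ super-Morse}]\le(1+o(1))\,q^2\,p^{-e_0}\,e^{vn}$, where $p^{-e_0}$ accounts for the shared pentagon edges not double-counted and $e^{vn}$, with $v=\E[\phi^2]/(\E[\phi])^2-1$, measures the residual correlation after conditioning on the at most $k$ edges to the shared vertices ($\phi$ being the resulting conditional non-bad probability of an outside vertex). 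A direct computation bounds $v$ by $O(p^3)$ for $k=1$ and for $k=2$ with a shared edge, and by $(1+o(1))p^2$ otherwise in this range; in either case $e^{vn}\le n^{(\lambda-\epsilon)^2+o(1)}<n^{1/2}$, while $n^{k}p^{e_0}=n^{\,k-e_0}(np)^{e_0}\ge n$ since $k-e_0\ge1$ and $np\to\infty$. Hence each such block contributes $\Theta\!\big(e^{vn}/(n^{k}p^{e_0})\big)\cdot\E[X]^2=o(\E[X]^2)$.

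Combining the two moments gives $\operatorname{Var}(X)=o(\E[X]^2)$, so $\Pb[X=0]\le\operatorname{Var}(X)/\E[X]^2\to0$ by Chebyshev's inequality, and $\Gamma$ a.a.s.\ contains a super-Morse, hence Morse, pentagon. The crux — flagged in the paragraph before the statement — is the second moment: the super-Morse condition constrains an entire neighbourhood, so it correlates even vertex-disjoint pentagons, and for overlapping pentagons the correlation factor $\E[\phi^2]/(\E[\phi])^2=1+O(p^2)$ is only controlled polynomially once raised to the $\approx n$-th power, which must be weighed against the polynomial savings in the pair count; the $k=4$ overlap is salvaged only by the separate observation that it cannot occur at all.
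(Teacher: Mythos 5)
Your proposal is correct and follows essentially the same route as the paper: a second moment argument applied to the count of induced pentagons having no external common neighbour of a non-adjacent pair, with the same first-moment threshold computation and the same decomposition of the variance by the overlap $|V(C)\cap V(C')|$, including the key observation that overlaps forcing a shared non-adjacent pair without its midpoint (in particular the size-$4$ overlap) contribute zero. Your explicit ``super-Morse'' reformulation and the uniform $e^{vn}$ correlation bookkeeping are a cleaner packaging of, but not materially different from, the paper's case-by-case estimates via Lemma~\ref{lem:linkdependent}.
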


\begin{proof} 
	Let $X$ be the number of Morse $5$--cycles in $\Gamma$. Then $X=\sum_{S\subset V(\Gamma), |S|=5|} X_S$, where $$X_S = \begin{cases} 1, &\text{ if } S \text{ forms a Morse $5$--cycle}\\
		0, & \text{ otherwise}\end{cases}.$$
	 
	We use the second moment method to show that $\Pb[X>0]\to 1$ as $n\to \infty$. To do this, first we show that $\E[X]\to \infty$.
		
Let $S$ be a set of $5$ vertices. To form a Morse pentagon, $S$ must form an induced pentagon, and no two non-adjacent vertices in $S$ can have a common neighbor outside of the $5$--cycle (see Figure~\ref{fig:5cycles}).

The probability that $S$ forms an induced $5$--cycle is $\frac{5!}{10}p^5(1-p)^5$. Given that $S$ forms an induced $5$--cycle, the probability that a vertex outside of $S$ has two non-adjacent neighbors in $S$ is $5p^2+o(p^2)$, and thus the probability that $S$ is Morse is $\frac{5!}{10}p^5(1-p)^5(1-5p^2+o(p^2))^{n-5}>C p^5(1-p)^5e^{-5p^2n}$ for $n$ sufficiently large.

Thus:

$$\E[X] > C\binom{n}{5}p^5(1-p)^5e^{-5p^2n}=\Omega((np)^5 e^{-5p^2n}).$$

If $p(n) = O(n^{-1/2})$, and $p(n) = \omega(n^{-1})$, then $(np)^5\to \infty$ and $e^{-5p^2n}=O(1)$.

If $p(n)=\Omega(n^{-1/2})$ and $p(n) < (\lambda - \epsilon) \sqrt{\ds\frac{\log{n}}{n}}$, then $$\E[X] = \Omega(n^{5/2}e^{(-5/2+5\epsilon)\log{n}}) = \Omega(n^{5\epsilon}).$$

In both cases, we see that $\E[X]\to\infty$.

Now we estimate $\text{Var}(X) = \E[(X-\E[X])^2]=\E[X^2]-E[X]^2$, and show that $\text{Var}(X) = o(\E[X]^2)$. From there, the result follows immediately from Chebyshev's inequality.

Since $X$ is a sum of indicator variables, we see that:
$$\text{Var}(X) \le \E[X] + \sum_{S\neq S'}\left(\E[X_SX_{S'}] - \E[X_S]\E[X_{S'}]\right)$$

But $\E[X_S]\E[X_{S'}] \ge 0$, so we may (selectively) remove those terms from the sum. We estimate:
   
$$\text{Var}(X) \le \E[X] + \sum_{S\cap S' = \emptyset}\left(\E[X_SX_{S'}] - \E[X_S]\E[X_S']\right) + \sum_{S\cap S'\neq\emptyset} \E[X_SX_{S'}].$$

To complete the proof, we show that each of these sums is $o(\E[X]^2)$.

Noting that $\E[X_SX_S'] = \Pb[X_S=1] \Pb[X_{S'}=1\mid X_S = 1]$, we estimate the conditional probability for different configurations of $S, S'$.

If $S\cap S' = \emptyset$, then the probability that $S'$ forms an induced $5$--cycle is independent of $S$. Now the probability that no vertex outside $S'$ has two common neighbors which are not adjacent in $S'$ is at most the probability of the same event for all vertices outside $S\cup S'$. Thus, the probability that $S'$ is a Morse $5$--cycle is at most $\frac{5!}{10}p^5 (1-p)^5 (1-5p^2)^{n-10}$, and hence:
$$\E[X_SX_{S'}] \le \left(\frac{5!}{10}\right)^2p^{10}(1-p)^{10}(1-5p^2)^{n-5}(1-5p^2)^{n-10}$$

and thus:

\begin{align*}\sum_{S\cap S' = \emptyset}\left(\E[X_SX_{S'}] - \E[X_S]\E[X_S']\right)&\le \binom{n}{5}\binom{n-5}{5}\left(\frac{5!}{10}\right)^2p^{10}(1-p)^{10}(1-5p^2)^{2(n-5)}\left((1-5p^2)^{-5}-1\right)\\ &\le  \E[X]^2\left((1-5p^2)^{-5}-1\right) = o(\E[X]^2).\end{align*}

The last equality follows since $\left((1-5p^2)^{-5}-1\right)=o(1)$, as $p=o(1)$.

Now suppose that $S\cap S' \neq \emptyset$. We break this down depending on the size of $S\cap S'$

\underline{Case 1.} If $|S\cap S'|=1$, let $S\cap S'=\{w\}$. By Lemma~\ref{lem:linkdependent}(1),  the probability $v\in\link(w)$ given that $S$ is not Morse (and so $w\not\in \link(w)\cap\link(w'))\cup (\link(w)\cap\link(w''))$ for $w',w''\in S$  is $p+o(p)$.
Thus:
$\Pb[X_{S'}=1\mid X_S =1] \le 5p^5(1-p)^5(1-3p^2-2(p(p+o(p))))^{n-9}$. Thus 
\begin{align*}\sum_{|S\cap S'|=1}\E[X_SX_{S'}] &\le \binom{n}{5}\binom{n-5}{4}\left(\frac{5!}{10}\right)^2p^{10}(1-p)^{10}\cdot 5(1-5p^2+o(p^2))^{2(n-9)}\\
	 &=O(n^9p^{10}(1-5p^2)^{2(n-5)}) = o(\E[X]^2)\end{align*}
 
 \underline{Case 2.} If $|S\cap S'|=2$, let $S\cap S'=\{v,w\}$.
 
 If $\{v,w\}\in E(\Gamma)$ then similar to Case 1, $\Pb[X_{S'}=1\mid X_S=1] \le 6p^4(1-p)^5(1-5p^2+o(p^2))^{n-8}$
Summing over all such $S, S'$ yields at most: 
$$5\binom{n}{5}\binom{n-5}{3}\cdot \frac{5!}{10}\cdot 6 p^9(1-p)^{10}(1-5(p^2+o(p^2))^{2(n-8)} O(n^8p^9(1-5p^2+o(p^2))^{2(n-5)})=o(\E[X]^2),$$

since $np\to \infty$.

If $\{v,w\}\not\in E(\Gamma)$, given that $S$ forms a Morse pentagon, the distance between $v$ and $w$ in $\Gamma$ is precisely $2$. Thus, $v,w$ cannot be adjacent in $S'$ and also cannot be distance $2$ in $S'$ (since the edge path in $S'$ joining them will be outside of $S$, see Figure~\ref{fig:overlap}). Thus $\Pb[X_{S'}=1 \mid X_S=1]=0.$

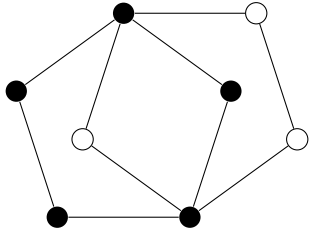
\begin{figure}
	\centering
	\begin{tikzpicture}[scale=0.75,every node/.style={fill=black,circle,inner sep=0.1cm}]
		\node(a) at (18:2){};
		\node(b) at (90:2){};
		\node(c) at (162:2){};
		\node(d) at (234:2){};
		\node(e) at (306:2){};
		\node[fill=none, draw=black](f) at (-0.7265,-0.2361){};
		\node[fill=none, draw=black](g) at (3.0777,-0.2361){};
		\node[fill=none, draw=black](h) at (2.3511,2){};
		\draw (a)--(b)--(c)--(d)--(e)--(a);
		\draw (b)--(f)--(e)--(g)--(h)--(b);
		
	\end{tikzpicture}
\caption{Two pentagons with intersection a pair of non-adjacent vertices. Vertices in $S$ are filled in. Neither pentagon can be Morse.}\label{fig:overlap}
\end{figure}
\underline{Case 3.} If $|S\cap S'|=3$, the analysis of the previous case shows that $\Pb[X_{S}'=1\mid X_S=1] = 0$ whenever $S'$ contains two non-adjacent vertices of $S$ but not the vertex between them. Thus, the only possibility contributing to our sum is when $S\cap S'$ forms an edge path of length $2$ in $S$.

If this case $\Pb[X_{S'}=1\mid X_S=1] \le 5p^3(1-p)^4(1-4p^2+o(p^2))^{n-7}$, since we already know that there is no vertex outside of $S\cup S'$ adjacent to both endpoints of the edge path, as $S'$ is Morse. Summing over all such $S, S'$ yields:

\begin{align*}\sum_{|S\cap S'|=3} \E[X_SX_{S'}] &\le \binom{n}{5}\binom{n-5}{2}\left(\frac{5!}{10}\right)\cdot 5 p^8(1-p^9)(1-5p^2)^{n-5}(1-4p^2+o(p^2))^{n-7}\\
	& = O(n^7p^8(1-5p^2)^{n-5}(1-4p^2)^{n-5})\end{align*}

Now, since $p<(\lambda-\epsilon)\sqrt{\frac{\log{n}}{n}}<0.9\sqrt{\frac{\log{n}}{n}}$:

\begin{align*}\left(\frac{1-4p^2}{1-5p^2}\right)^{n-5} &= \left(1+\frac{p^2}{1-5p^2}\right)^{n-5}\\
	&\le \left(1+p^2\right)^{n-5} \le (1+0.81\frac{\log{n}}{n})^{n-5}\\
	&\le n^{0.81}\end{align*}

Thus: $$\sum_{|S\cap S'|=3} \E[X_SX_{S'}]= O(n^7p^8(1-5p^2)^{n-5}(1-4p^2)^{n-5}) = O\left(\frac{\E[X]^2}{n^3p^2}\cdot\left(\frac{1-4p^2}{1-5p^2}\right)^{n-5}\right) = o(\E[X]^2),$$

since $np\to\infty$ implies that $\frac{1}{n^3p^2} =o(\frac{1}{n})$.

\underline{Case 4.} If $|S\cap S'| = 4$, then $S\cap S'$ must contain two non-adjacent vertices of $S$ but not the vertex between them. Hence $\Pb[X_{S'}=1\mid X_S=1]=0$.

This completes the case work. Putting the cases together yields:

$$\text{Var}(X) \le \E[X] + \sum_{S\cap S' = \emptyset} \left(\E[X_SX_{S'}]-\E[X_S]\E[X_S']\right) + \sum_{i=1}^3\sum_{|S\cap S'|=i} \E[X_SX_{S'}]=\E[X] + o(\E[X]^2)= o(\E[X]^2).$$

Applying Chebyshev's inequality, $\Pb[X>0] \to 1$ as $n\to\infty$. Thus, a.a.s. a random graph at density $p(n)< (\lambda-\epsilon)\sqrt{\frac{\log{n}}{n}}$ and $np\to\infty$ contains a Morse $5$--cycle.
\end{proof}
\begin{thm}\label{thm:noRAAG} Let $\lambda=\sqrt{\ds\frac{1}{2}}$ and let $\epsilon>0 $. If $np\to \infty$ and  $p(n)<(\lambda-\epsilon)\sqrt{\ds\frac{\log{n}}{n}}$, then a.a.s. the Morse boundary of $W_\Gamma$ contains an embedded copy of $S^1$, in particularly it is not totally disconnected. \end{thm}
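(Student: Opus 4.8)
The plan is to deduce this immediately from Theorem~\ref{thm:pentagons} together with the geometric argument already carried out in the proof of Corollary~\ref{cor:qiRAAG}. By Theorem~\ref{thm:pentagons}, under the hypotheses $np\to\infty$ and $p(n)<(\lambda-\epsilon)\sqrt{\log n/n}$, a random graph $\Gamma\in\G(n,p)$ asymptotically almost surely contains a Morse pentagon $C$. So it suffices to show that the presence of a single Morse $5$--cycle forces an embedded $S^1$ in $\partial_M(W_\Gamma)$; and this is exactly the content of the first paragraph of the proof of Corollary~\ref{cor:qiRAAG}.

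More precisely, once we have such a $C$: since $C$ is an induced subgraph, $W_C\le W_\Gamma$ is a special subgroup, quasi-isometrically (indeed isometrically and convexly) embedded, and $W_C$ is virtually a cocompact Fuchsian group, hence quasi-isometric to $\mathbb{H}^2$. By Proposition~\ref{thm:Morsebdypentagon}, the Morse condition on the subgraph $C$ is equivalent to $W_C$ being a Morse subgroup of $W_\Gamma$. Proposition~\ref{prop:Morse}(2) then gives $\partial_M(W_C)\cong\partial_M(\mathbb{H}^2)\cong S^1$, and Proposition~\ref{prop:Morse}(3) gives an embedding $\iota_M\colon\partial_M(W_C)\hookrightarrow\partial_M(W_\Gamma)$. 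Composing, $\partial_M(W_\Gamma)$ contains an embedded copy of $S^1$, so it is not totally disconnected. Since the event that $\Gamma$ contains a Morse pentagon has probability tending to $1$, so does the event that $\partial_M(W_\Gamma)$ contains an embedded $S^1$, which is the theorem.

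I do not expect a genuine obstacle at this stage: all of the probabilistic work has been absorbed into Theorem~\ref{thm:pentagons}, whose only delicate point is the second-moment estimate (the Morse/stability condition on a $5$--set is not quite independent across disjoint sets), and all of the coarse geometry has been absorbed into Propositions~\ref{prop:Morse} and~\ref{thm:Morsebdypentagon} and Corollary~\ref{cor:qiRAAG}. The one thing to be careful about is that the statement we want is the topological conclusion about $\partial_M(W_\Gamma)$ rather than the quasi-isometry conclusion, but the embedding $S^1\hookrightarrow\partial_M(W_\Gamma)$ is produced directly, so no extra step is needed. One could equally well use any Morse $k$--cycle with $k\ge 5$, since a $k$--gon reflection group is still virtually cocompact Fuchsian and hence quasi-isometric to $\mathbb{H}^2$; the pentagon is simply the cheapest witness, which is why Theorem~\ref{thm:pentagons} targets it.
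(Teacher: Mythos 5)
Your proposal is correct and follows exactly the paper's route: the paper's proof of this theorem is a one-line deduction from Theorem~\ref{thm:pentagons}, Proposition~\ref{thm:Morsebdypentagon}, and Proposition~\ref{prop:Morse}, with the geometric details being precisely those spelled out in the first paragraph of the proof of Corollary~\ref{cor:qiRAAG}. You have simply written out the same argument in more detail.
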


\begin{proof} This follows immediately from Theorem~\ref{thm:pentagons}, Proposition~\ref{thm:Morsebdypentagon} and Proposition~\ref{prop:Morse}(3).\end{proof}

%
%
%
%
%
%
%
%
%
%
%

We next want to show that at densities above $(\lambda+\epsilon)\sqrt{\frac{\log{n}}{n}}$, there are no Morse cycles. It's straight forward to see that every cycle has at least one pair of vertices at distance $2$ with a common neighbor outside of the cycle; however, we must show that the set of all common neighbors does not form a clique in order to obtain an \emph{induced} $4$--cycle containing those two vertices. 
	
Recall that for a vertex $v\in \Gamma$, $\link(v)$ is the subgraph induced by the set of all vertices adjacent to $v$.

\begin{prop}\label{prop:unstable}Let $\lambda = \sqrt{\ds\frac{1}{2}}$ and let $\epsilon >0$. If $p(n) > (\lambda + \epsilon) \sqrt{\ds\frac{\log{n}}{n}}$ and then a.a.s. $\Gamma\in \G(n,p)$ contains no Morse $k$--cycle with $k\ge 5$.\end{prop}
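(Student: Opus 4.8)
\noindent The plan is a first--moment count of \emph{Morse} $k$--cycles, in effect the reverse of the proof of Theorem~\ref{thm:pentagons}. Throughout set $c=c(n):=n\,p(n)^2/\log n$, so the hypothesis says $c>(\lambda+\epsilon)^2=\tfrac12+\delta$ for a fixed $\delta=\delta(\epsilon)>0$; in particular $np\to\infty$, though $p$ need not tend to $0$. The key combinatorial input is a certificate for non-Morseness: if $C=(v_1,\dots,v_k)$ is an induced $k$--cycle with $k\ge5$ (indices mod $k$), then on $C$ the only common neighbour of $v_i$ and $v_{i+2}$ is $v_{i+1}$, so as soon as some vertex $x\notin C$ satisfies $x\sim v_i$, $x\sim v_{i+2}$ and $x\not\sim v_{i+1}$, the set $\{v_i,v_{i+1},v_{i+2},x\}$ spans an induced $4$--cycle meeting $C$ in the non-adjacent pair $\{v_i,v_{i+2}\}$ but not contained in $C$, so $C$ is not Morse. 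I will call such an $x$ a \emph{witness for $C$}, and it suffices to show that a.a.s.\ every induced $k$--cycle with $k\ge5$ admits one.

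\medskip\noindent For fixed $k$, the events ``$C$ is an induced $k$--cycle'' and ``$C$ has no witness'' are independent — the first depends only on edges inside $V(C)$, the second only on edges between $V(C)$ and its complement — and the second is the intersection, over the $n-k$ vertices $x\notin C$, of the mutually independent events $\{x\text{ is not a witness}\}$. If $\alpha$ is the probability that a fixed $x\notin C$ is a witness, then inclusion--exclusion over the $k$ admissible indices yields $\alpha=k\,p^2(1-p)\,(1-o(1))$ near the threshold (and always $\alpha\ge p^2(1-p)$), so $\Pr[C\text{ a Morse }k\text{--cycle}]\le(1-\alpha)^{n-k}\le\exp(-k\,p^2n(1-o(1)))$. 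Since there are at most $\binom{n}{k}\tfrac{(k-1)!}{2}\le n^k/(2k)$ potential induced $k$--cycles, each present with probability at most $p^k$, this gives $\E[\#\text{Morse }k\text{--cycles}]\le\tfrac{1}{2k}\big(np\,e^{-p^2n(1-o(1))}\big)^{k}$. Because $np=\sqrt n\cdot\sqrt{p^2n}$ and $x\mapsto\sqrt x\,e^{-x}$ is decreasing for $x>\tfrac12$, the hypothesis $p^2n\ge(\tfrac12+\delta)\log n$ forces $np\,e^{-p^2n}\le\sqrt{(\tfrac12+\delta)\log n}\;n^{-\delta}=o(1)$, so the bracketed quantity is $o(1)$.

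\medskip\noindent To make this a bound for \emph{all} $k\ge5$ at once, I first cut off the long cycles: an induced $k$--cycle contains an independent set of size $\lfloor k/2\rfloor$, and a routine first moment shows a.a.s.\ $\Gamma$ has no independent set of size exceeding $2(\log n)/p$, hence no induced cycle of length $>K:=\lceil4(\log n)/p\rceil$. Over $5\le k\le K$ the error terms above are uniform in $k$ (this is what the cutoff $K$ is for — see below), so $\E[\#\text{Morse }k\text{--cycles},\ k\ge5]$ is at most a geometric series $\sum_{k\ge5}(o(1))^k\to0$, and Markov's inequality finishes the proof.

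\medskip\noindent The step I expect to be the real obstacle is the constant in the exponent: what is actually needed is the factor $k$ in $e^{-kp^2n}$, since with only $e^{-p^2n}$ one would need $p^2n>\tfrac52\log n$ and the threshold $\lambda=\sqrt{1/2}$ would come out wrong. Establishing $\alpha=(1-o(1))\,k\,p^2(1-p)$ requires checking that the $k$ single-index witness events are pairwise almost disjoint: cyclically adjacent indices are incompatible (contributing $0$), and the remaining $O(k^2)$ overlaps are $O(kp^3+k^2p^4)=o(kp^2)$ uniformly for $k\le K$, using $Kp^2=O(p\log n)$, which is $o(1)$ precisely in the near-threshold range where this refinement is needed (further from the threshold the crude bound $\alpha\ge p^2(1-p)$ already suffices). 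This near-disjointness bookkeeping is the analogue here of the subtlety noted before Lemma~\ref{lem:linkdependent}. One last point: when $p\to1$ the witnesses as defined become too rare, and I would instead certify non-Morseness using two \emph{non-adjacent} common neighbours of some $v_i,v_{i+2}$ lying outside $C$ (expected number $\Theta(n^2p^4(1-p))$ per cycle, ample to rerun Steps~2--3); and when $p$ is so close to $1$ that even this fails, $\Gamma$ a.a.s.\ has no induced $k$--cycle with $k\ge5$ at all, by a direct first moment.
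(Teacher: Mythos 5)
Your argument is correct in its essential case and takes a genuinely different route from the paper's. For the near-threshold regime, the paper bounds $\Pb[\text{Morse}]$ by the probability that \emph{every} common neighbourhood $\link(v_i)\cap\link(v_{i+2})$ is a clique, and controls the dependence between these $k$ events by conditioning sequentially, which is what Lemma~\ref{lem:linkdependent} is for; it also needs to condition on the maximum clique size being $5$ so that ``is a clique'' forces ``has at most $5$ elements.'' You instead slice the complementary event by \emph{outside vertex} rather than by \emph{index pair}: the events ``$x$ is not a witness'' are genuinely independent across $x\notin C$, and the only dependence left to control is the overlap of the $k$ single-index witness events for one fixed $x$, which is a finite inclusion--exclusion giving $\alpha=(1-o(1))kp^2(1-p)$ uniformly for $k\le K$ since $Kp^2=O(p\log n)=o(1)$ there. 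This is a cleaner organization of the same first moment, it avoids both the conditional-probability lemma and the clique-number conditioning, and it recovers the same exponent $e^{-kp^2n(1+o(1))}$, hence the same threshold $\lambda=\sqrt{1/2}$ via $np\,e^{-p^2n}=\sqrt{n}\cdot\sqrt{p^2n}\,e^{-p^2n}$ and monotonicity of $\sqrt{x}e^{-x}$; your independent-set cutoff $K=\lceil 4(\log n)/p\rceil$ plays the role of the paper's ``no induced cycles longer than $3\sqrt{n\log n}$'' claim. The one place where you are substantially thinner than the paper is the large-$p$ regime: the paper devotes four explicit cases to $p=\omega(\sqrt{\log n/n})$, including the delicate endgame $p\to 1$ where it invokes concentration of $|\link(v)\cap\link(w)|$ versus clique number from \cite{BFRHS} and the join/near-complete structure from \cite{BHS:Coxeter}, whereas you only sketch the alternative certificate (two non-adjacent common neighbours outside $C$) and assert the rest. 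Your two regimes do in fact overlap --- the refined bound needs $p\log n=o(1)$ while the crude single-witness bound (with the cutoff $K$) needs roughly $p^3(1-p)n\gg\log^2 n$, and every $p\le 1-\delta$ satisfies at least one of these --- but you should verify that overlap explicitly and fill in the $p\to1$ endgame before calling the proof complete.
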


\begin{proof} We must prove that either $\Gamma$ contains no induced $k$--cycles or each $k$--cycle $C\subseteq \Gamma$ has the property that there are non-adjacent vertices $v,w\in C$ with $\link(v)\cap\link(w)$ not a clique.
	
We first cover densities $p(n)=\omega(\sqrt\frac{\log{n}}{n})$, where for any two vertices $v,w$, $|\link(v)\cap\link(w)|$ is well-controlled. Indeed, it follows from Chernoff's inequalities that when $p(n)=\omega(\sqrt{\frac{\log{n}}{n}})$, $|\link(v)\cap\link(w)|=O(np^2)$ for any two vertices $v,w\in \Gamma$ \cite[Corollary 4.2]{BFRHS}. We now break down further, distinguishing cases where $p(n)\to 1$ as $n\to \infty$.

\underline{Case 1.} $p=\omega(\sqrt\frac{\log{n}}{n})$, and $p\le 1-\delta$ for some constant $\delta >0$. Here, the size of largest clique is $O(\log{n})=o(np^2)$ \cite[Lemma 4.3]{BFRHS};

\underline{Case 2.} $p\to 1$ as $n\to\infty$ and $p \le 1-\frac{\log{n}}{2n}$. In this case, the size of the common neighbor set is $O(n)$, but the size of the largest clique is $o(n)$.

In both of these cases, we see that no special subgroup arising from a cycle can be Morse, since every pair of vertices $v,w$ has $\link(v)\cap\link(w)$ larger than the size of the largest clique. Now, there are two more cases for $p$ very close to $1$.

\underline{Case 3.} If $p> 1- \frac{\log{n}}{2n}$ but $(1-p)n^2\to \infty$, then by \cite[Proof of Theorem 4.4, Case 3]{BFRHS}, $\Gamma$ decomposes as a non-trivial join. Hence, any set of non-adjacent vertices in $\Gamma$ are opposite vertices in a square.

\underline{Case 4.} If $(1-p)n^2\to\alpha<\infty$, then by \cite[Theorem V]{BHS:Coxeter} every vertex of $\Gamma$ a.a.s. has at most one non-neighbor ruling out an induced $k$--cycles for $k\ge 5$.

We now consider the case where $p=(\lambda+\epsilon)\sqrt{\frac{\log{n}}{n}}$ for a constant $\epsilon>0$. Note that in this case the size of the largest clique in $\Gamma$ is a.a.s. $5$ \cite[Chapter 4]{AlonSpencer}. For the remainder of the argument, we will condition on this. Before we continue the argument, we first claim that a.a.s. there are no very large induced cycles.

\underline{Claim.} For $p=(\lambda+\epsilon)\sqrt{\frac{\log{n}}{n}}$, a.a.s. a random graph $\Gamma$ at density $p$ contains no induced $k$--cycles for $k>3\sqrt{n\log{n}}$.

\begin{proof}[Proof of Claim.] By Markov's inequality, the probability that $\Gamma$ contains an induced $k$--cycle is at most:
	
	$$\binom{n}{k}\frac{k!}{2k}p^k(1-p)^{\binom{k}{2}-k} \le (np)^k(1-p)^{\frac{k^2-3k}{2}}\le (np)^ke^{-p\frac{k^2-3k}{2}}$$
	
	Since $p=(\lambda+\epsilon)\sqrt{\frac{\log{n}}{n}}$, this becomes
	
	$$(\lambda+\epsilon)^k(n\log{n})^{k/2}n^{-(\lambda+\epsilon)\frac{k^2-3k}{2\sqrt{n\log{n}}}}$$
			
	But, for $k>3\sqrt{n\log{n}}$, $-(\lambda+\epsilon)\frac{k^2-3k}{2\sqrt{n\log{n}}}+\frac{k}{2}<-\frac{3(\lambda+\epsilon)k}{2}$. Thus, the probability that $\Gamma$ contains an induced $k$--cycle for any $k>3\sqrt{n\log{n}}$ is atmost:
		
	$$\sum_{k=3\sqrt{n\log{n}}}^n \left(\frac{(\lambda+\epsilon)\sqrt{\log{n}}}{n^{3(\lambda+\epsilon)/2}}\right)^k=\sum_{k=3\sqrt{n\log{n}}}^n o(n^{-3k/2}) = o(1).$$ 

Thus, a.a.s., $\Gamma$ does not contain any induced $k$--cycles with $k>3\sqrt{n\log{n}}.$\end{proof}

Let $\{v,w\}$ be a pair of vertices at distance $2$ in an induced cycle $C$ with $|C|=k$ of $\Gamma$ (note that, by the claim, we may assume that $5\le k \le 3\sqrt{n\log{n}}$). Let $E(v,w)$ be the event that $\link(v)\cap\link(w)$ is a clique. Note that $|\link(v)\cap\link(w)|$ outside of $C$ is a binomial variable with $n-k$ trials and success probability $p^2$, and $\{v,w\}$ already have one common neighbor (coming from $C$). Thus the probability that $\link(v)\cap\link(w)$ forms a clique (given that there are no cliques of size at least $6$) is:  

$$\sum_{l=0}^4\binom{n-k}{l}p^{2l}(1-p^2)^{n-k-l}\cdot p^{\binom{l+1}{2}}.$$

Since $p(n)=(\lambda+\epsilon)\sqrt{\frac{\log{n}}{n}}$, this is $\Theta(n^{-(\lambda+\epsilon)^2})$. 


Let $C$ be a $k$--cycle for $5\le k\le 3\sqrt{n\log n}$ with vertex set $\{v_1, v_2, \ldots, v_k\}$. For a fixed $1\le i\le k-2$, we ignore all vertices in $C\cup \bigcup_{j=1}^{i-1} \left(\link(v_j)\cap \link(v_{j+2})\right)$. Conditioning on $\bigwedge_{j=1}^{i-1}E(v_j, v_{j+2})$, we see that $|\link(v_j)\cap \link(v_{j+2})|\le 5$. By Lemma~\ref{lem:linkdependent}(1), the probability that $v$ is adjacent to $v_{i+1}$, conditional on $v\not\in\link(v_{i-1})\cap\link(v_{i+1})$ is $p+o(p)$, and similarly the probabality that $v\in\link(v_i)\cap\link(v_{i+2})$ given that $v\not\in\link(v_{i-2})\cap\link(v_i)$ is $p(p+o(p))=p^2+o(p^2)$. Thus we see the following conditional probability for $E(v_i, v_{i+2})$: 

\begin{align*}&\Pb\left[E(v_i v_{i+2})\Bigg|\bigwedge_{j=1}^{i-1}E(v_j, v_{j+2})\right]\le\\
	 &\sum_{l=0}^4\binom{n-k-5(i-1)}{l}(p^2+o(p^2))^{2l}(1-p^2+o(p^2))^{n-k-5(i-1)-l}\cdot (p+o(p))^lp^{\binom{l}{2}}=O(n^{-(\lambda+\epsilon)^2}),&
\end{align*}

since $i\le k=o(n)$.

Now, for $i=k-1$, using Lemma~\ref{lem:linkdependent}(3) the probability that $v\in \link(v_{k-1})\cap\link(v_1)$ given that $v\not\in \left(\link(v_{k-3})\cap\link(v_{k-1})\right)\cup\left(\link(v_1)\cap\link(v_3)\right)$ is $p^2+o(p^2)$. By the same computation:
$$\Pb\left[E(v_{k-1}, v_{1})\Bigg| \bigwedge_{j=1}^{k-2}E(v_j, v_{j+2})\right]=O(n^{-(\lambda+\epsilon)^2}).$$

Finally, for $i=k$ we again use Lemma~\ref{lem:linkdependent}(3) to see that the probability that $v\in \link(v_k)\cap \link(v_2)$ conditional on $v\not\in\left(\link(v_{k-2})\cap \link(v_k)\right)\cup\left(\link(v_2)\cap \link(v_4)\right)$ us $p^2+o(p)$. Furthe by Lemma~\ref{lem:linkdependent}(1), the probability that $v$ is adjacent to $v_1$ given that $v\not\in(\link(v_{k-1})\cap \link(v_1))\cup (\link(v_1)\cap \link(v_3))$ is $p+o(p)$.

Thus, by the same computation as above:

$$\Pb\left[E(v_{k}, v_{2})\Bigg| \bigwedge_{j=1}^{k-2}E(v_j, v_{j+1})\land E(v_{k-1},v_1)\right]=O(n^{-(\lambda+\epsilon)^2}).$$
Thus:
$$\Pb\left[\bigwedge_{i=1}^{k-2} E(v_i,v_{i+2})\land E(v_{k-1}, v_1)\land E(v_k, v_2)\right] = O(n^{-k(\lambda+\epsilon)^2}) = O(n^{-k/2-k(\sqrt{2}\epsilon - \epsilon^2)})$$

Thus, the expected number of Morse$k$--cycles (with $k\le 3\sqrt{n\log{n}}$) is at most:

$$\sum_{k=5}^{3\sqrt{n\log{n}}} O((np)^kn^{-k(\lambda+\epsilon)^2})$$

With $p=(\lambda+\epsilon)\sqrt{\frac{\log{n}}{n}}$ for $\epsilon>0$ this is $$\sum_{k=5}^{2\sqrt{n\log{n}}} O(n^{-k(\sqrt{2}\epsilon+\epsilon^2)}\log^{k/2}{n})=o(1).$$ Thus, by Markov's inequality, 
a.a.s. no such $k$--cycles exist. Hence, the corresponding special surface subgroups are not Morse.
\end{proof}

The following Corollary builds on this, relying on \cite{GLR:surfacesubgroups}.

\begin{cor}\label{cor:specialstable}Let $\epsilon>0, \lambda=\sqrt{\ds\frac{1}{2}}$ and let $p(n)>(\lambda+\epsilon)\sqrt{\frac{\log{n}}{n}}$. Then for $\Gamma\in\G(n,p)$, a.a.s. every hyperbolic Morse special subgroup in $W_\Gamma$ is virtually free.\end{cor}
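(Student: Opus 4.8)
The plan is to reduce everything to Proposition~\ref{prop:unstable}. By Proposition~\ref{thm:Morsebdypentagon} a Morse special subgroup of $W_\Gamma$ is $W_\Lambda$ for some induced Morse subgraph $\Lambda\subseteq\Gamma$, and by the standard hyperbolicity criterion for right-angled Coxeter groups ($W_\Lambda$ is word-hyperbolic iff $\Lambda$ contains no induced square), ``$W_\Lambda$ hyperbolic'' translates to ``$\Lambda$ has no induced $4$--cycle''. So it suffices to show that a.a.s., for every induced Morse subgraph $\Lambda$ of $\Gamma$ that contains no induced $4$--cycle, the group $W_\Lambda$ is virtually free. The external input I would quote from \cite{GLR:surfacesubgroups} is the dichotomy for hyperbolic right-angled Coxeter groups: $W_\Lambda$ is virtually free unless $\Lambda$ contains an induced cycle of length at least $5$ (in which case $W_\Lambda$ contains a hyperbolic surface subgroup). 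Here the interesting implication is that a chordal $\Lambda$ gives a virtually free $W_\Lambda$ --- its clique-separator block decomposition, fed into Bass--Serre theory, writes $W_\Lambda$ as built from finite groups amalgamated over finite subgroups --- while the other implication is exactly the mechanism by which a long induced cycle produces a surface subgroup.

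Granting this, suppose for contradiction that $\Lambda\subseteq\Gamma$ is an induced Morse subgraph with no induced $4$--cycle and with $W_\Lambda$ not virtually free, and fix an induced $k$--cycle $C\subseteq\Lambda$ with $k\ge 5$. I claim $C$ is a \emph{Morse $k$--cycle of $\Gamma$}. Since $\Lambda$ is induced in $\Gamma$ and $C$ is induced in $\Lambda$, $C$ is induced in $\Gamma$. Let $C'$ be any induced $4$--cycle of $\Gamma$ meeting $C$ in two non-adjacent vertices; those two vertices lie in $\Lambda$ and are non-adjacent, so because $\Lambda$ is a Morse subgraph of $\Gamma$ we get $C'\subseteq\Lambda$, whence $C'$ is an induced $4$--cycle inside $\Lambda$, contradicting the assumption on $\Lambda$. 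Therefore no such $C'$ exists; and since a cycle of length $\ge 5$ has no induced $4$--cycle as a subgraph, $C$ satisfies the definition of a Morse $k$--cycle.

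Now Proposition~\ref{prop:unstable} says that at density $p(n)>(\lambda+\epsilon)\sqrt{\log n/n}$ a random graph a.a.s.\ contains no Morse $k$--cycle with $k\ge 5$, contradicting the existence of $C$. Hence a.a.s.\ no hyperbolic Morse special subgroup of $W_\Gamma$ fails to be virtually free, which is the statement of the corollary. The only non-routine ingredient is the graph-theoretic characterization of virtually free hyperbolic right-angled Coxeter groups that I am importing from \cite{GLR:surfacesubgroups} --- this is what the corollary ``builds on'' --- and the only point where I would tread carefully is checking that Proposition~\ref{prop:unstable} is indeed available across the full stated range of $p(n)$ and that the argument uses hyperbolicity of $W_\Lambda$ solely through the absence of an induced square.
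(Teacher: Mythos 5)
Your proof is correct and takes essentially the same route as the paper: the dichotomy from \cite{GLR:surfacesubgroups} reduces the claim to ruling out an induced $k$--cycle $C$ with $k\ge 5$ inside a square-free Morse subgraph $\Lambda$, and Proposition~\ref{prop:unstable} supplies the contradiction. The only (harmless) difference is that you verify directly at the graph level that $C$ is a Morse $k$--cycle of $\Gamma$, whereas the paper phrases the same step group-theoretically, showing $W_C$ is Morse in $W_\Lambda$ and then concluding that $W_\Lambda$ could not be Morse in $W_\Gamma$.
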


\begin{proof} Let $\Lambda$ be a subgraph of $\Gamma$ so that $W_\Lambda$ is  hyperbolic. By \cite{GLR:surfacesubgroups}, $W_\Lambda$ is either virtually free or $\Lambda$ contains an induced $k$--cycle for $k\ge 4$. We rule out the latter possiblity.
	
Since $W_\Lambda$ is hyperbolic, $\Lambda$ cannot contain an induced $4$--cycle, so $W_\Lambda$ contains an induced $k$--cycle, $C$, with $k\ge 5$. Further, since $\Lambda$ is square-free, no two non-adjacent vertices of $C$ lie in a common join and so by Propositon~\ref{thm:Morsebdypentagon}, the corresponding subgroup $W_C$ is Morse in $W_\Lambda$.

But, by Proposition~\ref{prop:unstable}, if $\Gamma\in\G(n,p)$, $W_C$ is a.a.s. not Morse in $W_\Gamma$. Hence, $W_\Lambda$ is not Morse either. Thus $\Lambda$ cannot contain a $k$--cycle with $k\ge 5$, and so $W_\Lambda$ is virtually free.

\end{proof}

This, however, does not fully resolve the question of whether the Morse boundary of a random right-angled Coxeter groups with $p(n)>(\lambda+\epsilon)\sqrt{\frac{\log{n}}{n}}$ is a.a.s. totally disconnected. A promising path forward is to look at Karrer's property $\mathcal{C}$ introduced in \cite{Karrer:PropC}, which implies totally disconnected Morse boundary

\subsection*{Morse $4$--cycles}
While $k$--cycles for $k\ge 5$ were primarily studied here, the same can be done for $k=4$. In this case, the threshold is somewhat different, since there are only two distinct sets of vertices at distance $2$ in the cycles. Therefore, the expected number of Morse $4$--cycle is:
$$\binom{n}{4}\frac{4!}{8}p^4(1-p)^2(1-2p)^{n-4}=O((np)^4e^{-2p^2n})$$
which tends to $\infty$ when $p(n)<(1-\epsilon)\sqrt{\frac{\log{n}}{n}}$. Indeed, copying the proof of Theorem~\ref{thm:pentagons} \emph{mutatis mutandis}, we see:

\begin{prop}\label{prop:Morsesquare} Let $\epsilon>0$, $p(n)<(1-\epsilon)\sqrt{\frac{\log{n}}{n}}$ and $np\to\infty$, then a.a.s. $\Gamma\in\G(n,p)$ contains a Morse $4$--cycle. In particular, $W_\Gamma$ contains a Morse subgroup isomorphic to $\mathbb{Z}^2$.\end{prop}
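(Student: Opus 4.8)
The plan is to apply the second moment method to $X$, the number of $4$--element vertex sets $S$ that induce a $4$--cycle and for which no vertex outside $S$ is adjacent to both endpoints of a diagonal of $S$; such an $S$ is in particular a Morse $4$--cycle, so it suffices to show $\Pr[X>0]\to 1$. Because the two diagonals of a $4$--cycle are vertex--disjoint, a fixed outside vertex fails this extra condition with probability exactly $2p^2-p^4=1-(1-p^2)^2$, and these failures are mutually independent over the $n-4$ outside vertices; hence $\E[X]=\binom{n}{4}\frac{4!}{8}p^4(1-p)^2(1-p^2)^{2(n-4)}=\Theta\!\big((np)^4e^{-2p^2n}\big)$. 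Exactly as in Theorem~\ref{thm:pentagons}, $\E[X]\to\infty$: when $p=O(n^{-1/2})$ it is $\Omega((np)^4)\to\infty$ because $np\to\infty$, and when $p=\Omega(n^{-1/2})$ and $p<(1-\epsilon)\sqrt{\log n/n}$ it is $\Omega\!\big(n^{2-2(1-\epsilon)^2}\big)=\Omega\!\big(n^{2\epsilon(2-\epsilon)}\big)\to\infty$. It then remains to prove $\mathrm{Var}(X)=o(\E[X]^2)$, after which Chebyshev's inequality completes the argument.

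Writing $\mathrm{Var}(X)\le\E[X]+\sum_{S\cap S'=\emptyset}\big(\E[X_SX_{S'}]-\E[X_S]\E[X_{S'}]\big)+\sum_{S\cap S'\neq\emptyset}\E[X_SX_{S'}]$, I would treat the disjoint sum verbatim as in Theorem~\ref{thm:pentagons}: the event that $S'$ induces a $4$--cycle is independent of $X_S=1$, and bounding the ``no bad vertex'' event for $S'$ by the same event over the vertices outside $S\cup S'$ gives $\E[X_SX_{S'}]\le\big(\frac{4!}{8}\big)^2p^8(1-p)^4(1-p^2)^{2(n-4)}(1-p^2)^{2(n-8)}$, so that $\sum_{S\cap S'=\emptyset}\big(\E[X_SX_{S'}]-\E[X_S]\E[X_{S'}]\big)\le\E[X]^2\big((1-p^2)^{-8}-1\big)=o(\E[X]^2)$ since $p=o(1)$.

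For the overlapping sums the $4$--cycle case is in fact simpler than the pentagon case, because the Morse condition on $S$ kills two of the configurations outright. If $|S\cap S'|=3$, the three shared vertices form a length--$2$ path $u$--$v$--$w$ inside the $4$--cycle $S$, so $\{u,w\}$ is a diagonal of $S$; the only induced $4$--cycle on $\{u,v,w,t\}$ with a new vertex $t$ is $u$--$v$--$w$--$t$--$u$, which forces $t$ to be adjacent to both $u$ and $w$, impossible when $X_S=1$. Hence $\Pr[X_{S'}=1\mid X_S=1]=0$. Likewise, if $|S\cap S'|=2$ and the shared pair $\{v,w\}$ is a non-edge, then $\{v,w\}$ is simultaneously a diagonal of $S$ and of $S'$, so the other two vertices of $S'$ are each adjacent to both $v$ and $w$, again impossible when $X_S=1$; this case contributes $0$ as well. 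The two remaining cases I would handle by the same Landau bookkeeping as in Theorem~\ref{thm:pentagons}, using Lemma~\ref{lem:linkdependent}(1) for the conditional edge probabilities along $S$: for $|S\cap S'|=1$ one gets $\Pr[X_{S'}=1\mid X_S=1]=O\!\big(p^4(1-p^2)^{2(n-O(1))}\big)$, and summing over the $O(n^7)$ ordered pairs yields $O(\E[X]^2/n)=o(\E[X]^2)$; for $|S\cap S'|=2$ with the shared pair an edge, the one already--present edge lowers the exponent, $\Pr[X_{S'}=1\mid X_S=1]=O\!\big(p^3(1-p^2)^{2(n-O(1))}\big)$, and summing over the $O(n^6)$ ordered pairs yields $O\!\big(\E[X]^2/(n^2p)\big)=o(\E[X]^2)$ because $np\to\infty$. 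Adding $\E[X]=o(\E[X]^2)$ gives $\mathrm{Var}(X)=o(\E[X]^2)$, hence $\Pr[X>0]\to1$; the last sentence follows since, for an induced $4$--cycle $C$ that is Morse as a subgraph, $W_C\le W_\Gamma$ is Morse by Proposition~\ref{thm:Morsebdypentagon} and equals the graph join $D_\infty\times D_\infty$, which is virtually $\mathbb{Z}^2$, and a finite-index subgroup of a Morse subgroup is again Morse.

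The one genuine point of care --- the ``main obstacle'' such as it is --- is to confirm that the $|S\cap S'|\ge 2$ cases really do collapse as claimed, i.e. that the strong stability condition is strong enough to force the corresponding conditional probabilities to vanish or to be of strictly smaller order, and to check that the conditional edge estimates of Lemma~\ref{lem:linkdependent} apply uniformly as one runs along a fixed cycle, exactly as in the proof of Proposition~\ref{prop:unstable}.
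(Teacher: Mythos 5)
Your proof is correct and follows exactly the route the paper intends: the paper's own argument consists of the first-moment computation followed by an appeal to the proof of Theorem~\ref{thm:pentagons} \emph{mutatis mutandis}, and your second-moment case analysis is precisely that adaptation (your exact factor $(1-p^2)^{2(n-4)}$ even corrects the paper's typo $(1-2p)^{n-4}$ in the displayed expectation). Your observations that the $|S\cap S'|=3$ case and the non-adjacent $|S\cap S'|=2$ case vanish outright are the correct analogues of Cases 2--4 of the pentagon proof, and the remaining bookkeeping matches the paper's.
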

	
This immediately leads to the following Corollary, using Corollary~\ref{cor:qiRAAG}, Theorem~\ref{thm:pentagons} and Proposition~\ref{prop:Morsesquare}.

\begin{cor}\label{cor:noRAAG}Let $\epsilon>0$ if $p(n)<\left(1-\epsilon\right)\sqrt{\ds\frac{\log{n}}{n}}$ and $np\to\infty$, then a.a.s. a right-angled Coxeter group at density $p(n)$ is not quasi-isometric to a right-angled Artin group.\end{cor}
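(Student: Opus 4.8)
The plan is to produce, a.a.s., a Morse subgroup of $W_\Gamma$ whose existence rules out quasi-isometry to a right-angled Artin group via Corollary~\ref{cor:qiRAAG}: a Morse pentagon in the sparse regime, where Theorem~\ref{thm:pentagons} applies, and a Morse $4$--cycle in the denser regime, where Proposition~\ref{prop:Morsesquare} applies. Fix $\epsilon>0$ with $p(n)<(1-\epsilon)\sqrt{\tfrac{\log n}{n}}$ and $np\to\infty$, and set $\lambda=\sqrt{1/2}$. Since a property holds a.a.s.\ if and only if it holds a.a.s.\ along every subsequence, it is enough to deal separately with the indices $n$ for which $p(n)\le(\lambda-\tfrac{\epsilon}{2})\sqrt{\tfrac{\log n}{n}}$ and those for which $p(n)>(\lambda-\tfrac{\epsilon}{2})\sqrt{\tfrac{\log n}{n}}$.

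Along the first subsequence we still have $np\to\infty$ and $p(n)<(\lambda-\tfrac{\epsilon}{2})\sqrt{\tfrac{\log n}{n}}$, so Theorem~\ref{thm:pentagons} (applied with $\epsilon/2$ in place of $\epsilon$) gives that a.a.s.\ $\Gamma$ contains a Morse pentagon, whence the first assertion of Corollary~\ref{cor:qiRAAG} shows a.a.s.\ $W_\Gamma$ is not quasi-isometric to a right-angled Artin group. Crucially, this conclusion needs no hypothesis on the number of ends of $W_\Gamma$, which is exactly what lets us avoid worrying about connectivity of $\Gamma$ in the sparse range: for $p$ below the connectivity threshold $\Gamma$ is typically disconnected and $W_\Gamma$ has infinitely many ends, but the pentagon argument is insensitive to this.

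Along the second subsequence we have $(\lambda-\tfrac{\epsilon}{2})\sqrt{\tfrac{\log n}{n}}<p(n)<(1-\epsilon)\sqrt{\tfrac{\log n}{n}}$, so $np\ge(\lambda-\tfrac{\epsilon}{2})\sqrt{n\log n}\to\infty$ and in particular $np-\log n\to\infty$. By the classical $k$--connectivity thresholds for $\G(n,p)$, for each fixed $k$ the graph $\Gamma$ is a.a.s.\ $k$--connected; and for $p$ in this range the clique number of $\Gamma$ is a.a.s.\ bounded by an absolute constant (a routine first-moment bound, \emph{cf.}\ \cite[Chapter 4]{AlonSpencer}). Choosing $k$ larger than that constant, a.a.s.\ $\Gamma$ is connected, is not a complete graph, and has no separating clique, so by the standard characterization of one-endedness for right-angled Coxeter groups, a.a.s.\ $W_\Gamma$ is one-ended. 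Since $p(n)<(1-\epsilon)\sqrt{\tfrac{\log n}{n}}$ and $np\to\infty$, Proposition~\ref{prop:Morsesquare} gives that a.a.s.\ $\Gamma$ contains a Morse $4$--cycle, so a.a.s.\ $W_\Gamma$ is one-ended and contains a Morse subgroup isomorphic to $\mathbb{Z}^2$; the second assertion of Corollary~\ref{cor:qiRAAG} then shows a.a.s.\ $W_\Gamma$ is not quasi-isometric to a right-angled Artin group. Combining the two subsequences completes the proof.

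The step I expect to be the main obstacle is the one-endedness in the second case: one must be certain that a random $\Gamma$ at density up to $(1-\epsilon)\sqrt{\tfrac{\log n}{n}}$ is a.a.s.\ one-ended, i.e.\ has no separating clique. This works precisely because throughout this range the connectivity of $\G(n,p)$ grows without bound while its clique number stays bounded, so a separating set realizing a clique cannot exist; the only genuinely delicate densities are those handled in the first case, where the pentagon argument sidesteps the question of ends entirely. The remainder — choosing the split point at the pentagon threshold, tracking the $\epsilon$'s, and quoting the relevant standard facts about $\G(n,p)$ — should be routine.
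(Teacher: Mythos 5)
Your proof is correct and takes essentially the same route as the paper, which derives the corollary directly from Corollary~\ref{cor:qiRAAG}, Theorem~\ref{thm:pentagons} and Proposition~\ref{prop:Morsesquare} via exactly this density split (Morse pentagons below the $\sqrt{1/2}$ threshold, Morse $4$--cycles above it). Your explicit verification that $W_\Gamma$ is a.a.s.\ one-ended in the denser regime (unbounded vertex-connectivity versus bounded clique number, hence no separating clique) supplies a hypothesis of the second part of Corollary~\ref{cor:qiRAAG} that the paper leaves implicit, and is a worthwhile addition.
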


Now, since no two opposite vertices of a Morse $4$--cycle have a common neighbor, Morse $4$--cycles represent an isolated vertices in $\square(\Gamma)$. Thus, we obtain the following corollary, which complements \cite{BFRS}.

\begin{cor}\label{cor:disconnect}Let $\epsilon>0$, $p(n)<(1-\epsilon)\sqrt{\frac{\log{n}}{n}}$ and $np\to\infty$, then for $\Gamma\in\G(n,p)$ a.a.s. $\square(\Gamma)$ contains an isolated vertex. In particular, $\square(\Gamma)$ is disconnected.\end{cor}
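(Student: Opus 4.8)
The plan is to read the statement off directly from Proposition~\ref{prop:Morsesquare} together with the definition of the square graph. First I would unwind definitions: a vertex of $\square(\Gamma)$ is an induced $4$--cycle of $\Gamma$, and two vertices of $\square(\Gamma)$ are adjacent exactly when the two corresponding $4$--cycles share a pair of non-adjacent vertices. So it suffices to produce, a.a.s., an induced $4$--cycle $C$ of $\Gamma$ which shares no pair of non-adjacent vertices with any \emph{other} induced $4$--cycle, and then to check that $\square(\Gamma)$ has at least one further vertex, so that such an isolated vertex genuinely witnesses disconnectedness.

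The key observation is that every Morse $4$--cycle is such a $C$. Suppose $C$ is a Morse $4$--cycle and $C'$ is an induced $4$--cycle whose intersection with $C$ contains two non-adjacent vertices $v,w$. Applying the second clause of the definition of a Morse subgraph with $\Lambda=C$ and the induced $4$--cycle $C'$ gives $C'\subseteq C$, and since $|C'|=|C|=4$ we conclude $C'=C$. Hence no edge of $\square(\Gamma)$ is incident to the vertex represented by $C$; that is, $C$ is isolated in $\square(\Gamma)$. This is essentially the remark already made before the statement, that no two opposite vertices of a Morse $4$--cycle have a common neighbor.

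It remains to produce such cycles, and enough of them. By Proposition~\ref{prop:Morsesquare} a random $\Gamma$ a.a.s. contains a Morse $4$--cycle; in fact the second--moment computation in its proof (obtained by copying the proof of Theorem~\ref{thm:pentagons}, where, moreover, the overlapping cases in which two $4$--cycles share a non-adjacent pair contribute nothing once one restricts to \emph{Morse} cycles, by the observation above) shows that the number $X$ of Morse $4$--cycles satisfies $\E[X]=\Omega((np)^4 e^{-2p^2n})\to\infty$ and $\mathrm{Var}(X)=o(\E[X]^2)$, so that $X\to\infty$ a.a.s. by Chebyshev. Distinct induced $4$--cycles are distinct vertices of $\square(\Gamma)$, so a.a.s. $\square(\Gamma)$ contains at least two isolated vertices and is therefore disconnected. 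The only point needing care — the main obstacle, such as it is — is precisely this last bookkeeping step: one must know that the number of Morse $4$--cycles tends to infinity (not merely that one exists) in order to exclude the degenerate possibility that $\square(\Gamma)$ is a single, vacuously connected, vertex. Everything else is immediate from the definitions and from Proposition~\ref{prop:Morsesquare}.
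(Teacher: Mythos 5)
Your proof is correct and follows essentially the same route as the paper, which likewise derives the corollary from Proposition~\ref{prop:Morsesquare} together with the observation that a Morse $4$--cycle meets no other induced $4$--cycle in a pair of non-adjacent vertices and is therefore an isolated vertex of $\square(\Gamma)$. Your additional bookkeeping step --- verifying via the second-moment bound that the number of Morse $4$--cycles tends to infinity, so that $\square(\Gamma)$ has more than one vertex and the isolated vertex genuinely witnesses disconnectedness --- is a point the paper leaves implicit, and you handle it correctly.
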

	
Indeed, above this threshold, a simple Markov's inequality shows that $\square(\Gamma)$ a.a.s. has no isolated vertices. This leads to the following conjecture, on connectedness of $\square(\Gamma)$, analogous to the classical proof by Erd\H{o}s and R\'enyi of the connectedness threshold for random graphs \cite{ErdosRenyi1}. 

\begin{conj}\label{conj:squareconnect} Let $\epsilon>0$ and $p(n)>(1+\epsilon)\sqrt{\frac{\log{n}}{n}}$, then for $\Gamma\in\G(n,p)$ a.a.s. $\square(\Gamma)$ is connected.\end{conj}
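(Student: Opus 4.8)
The plan is to adapt the classical Erd\H{o}s--R\'enyi connectivity argument \cite{ErdosRenyi1} to the square graph. If $\square(\Gamma)$ is disconnected then it has a connected component $S$ with $1 \le |S| \le N/2$, where $N = |V(\square(\Gamma))| = \Theta(n^2\log^2 n)$. The case $|S| = 1$ is the absence of isolated vertices, which the remark preceding the conjecture already handles by a first moment estimate: it is the exact complement of the computation behind Corollary~\ref{cor:disconnect} and Proposition~\ref{prop:Morsesquare}, now run above the threshold so that $\E[\#\,\text{isolated squares}] = O((np)^4 e^{-2p^2 n}) = O(n^{-4\epsilon - 2\epsilon^2}\log^2 n) \to 0$. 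The entire content of the conjecture is therefore to rule out components with $2 \le |S| \le N/2$ by a union bound over all possible such configurations.

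First I would recast a component combinatorially. A connected component $S$ of $\square(\Gamma)$ is a maximal collection of induced $4$--cycles linked through shared non-adjacent pairs. Encode it by its \emph{support} $U \subseteq V(\Gamma)$ (the vertices lying in some square of $S$) together with the \emph{pair graph} $P(S)$, whose vertices are the non-adjacent pairs $\{u,v\}$ arising as an opposite pair of some square of $S$ and whose edges are the squares of $S$ (each square joining its two opposite pairs). Writing $t = |U|$, $m = |V(P(S))|$ and $s = |S|$, connectivity forces $P(S)$ connected, hence $m \le s + 1$; and since every pair consists of two vertices of $U$ we also have $t \le 2m$. The decisive point is the \emph{isolation condition}: because $S$ is a full component, no square outside $S$ may share a non-adjacent pair with $S$, so each of the $m$ pairs $\{u,v\}\in V(P(S))$ must be \emph{saturated}, meaning no vertex outside $U$ is a common neighbor of $\{u,v\}$ completing a new induced $4$--cycle. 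This is exactly the single-square saturation event analyzed for isolated vertices, whose probability near the threshold is $\approx (1-p^2)^{n} = n^{-(1+\epsilon)^2 + o(1)}$ per pair.

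The union bound then reads, schematically,
$$\Pb[\exists\,S,\ 2 \le |S| \le N/2] \ \lesssim\ \sum_{t,m,s} (\text{structures on } t \text{ vertices}) \cdot n^{t}\,(p^4)^{s}\, n^{-m(1+\epsilon)^2},$$
where $n^t$ bounds the choice of $U$, $(p^4(1-p)^2)^s \approx (p^4)^s$ the presence of the $s$ squares, and $n^{-m(1+\epsilon)^2}$ the saturation of the $m$ pairs. Substituting $p = (1+\epsilon)\sqrt{\log n / n}$ turns the exponent of $n$ into $t - 2s - m(1+\epsilon)^2$, up to polylogarithmic and (conjecturally) subexponential-in-$s$ multiplicities. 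Feeding in $t \le 2m$ and $m \le s+1$ gives, in the relevant small-$\epsilon$ regime,
$$t - 2s - m(1+\epsilon)^2 \ \le\ 2m - 2s - m(1+\epsilon)^2 \ \le\ -s(1+\epsilon)^2 + O(1),$$
and in general a bound of the form $-c(\epsilon)\,s + O(1)$ with $c(\epsilon) > 0$, so each term decays geometrically in $s$, the series converges, and the whole probability is $o(1)$. This matches the heuristic that the isolated-vertex case $s=1$ is the genuine bottleneck and larger components are strictly more suppressed, which is precisely why the threshold coincides with the $\sqrt{\log n / n}$ of Corollary~\ref{cor:disconnect}.

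The hard part will be making the per-pair saturation estimate rigorous \emph{simultaneously} across all $m$ pairs of a component. The saturation events are not independent: conditioning on one pair being saturated perturbs the common-neighborhood sizes of the others, exactly the dependency flagged in the discussion before Lemma~\ref{lem:linkdependent} and confronted for cycles in Proposition~\ref{prop:unstable}. The remedy is to process the pairs of $S$ in a spanning-tree order of $P(S)$ and estimate each conditional saturation probability via Lemma~\ref{lem:linkdependent}, as in the cycle computation, but now one must (i) control the accumulated $o(p)$ and $o(p^2)$ error terms over up to $\Theta(n)$ pairs; (ii) rule out $\link(u)\cap\link(v)$ being itself a large clique, using that the largest clique has bounded size at this density, as in the conditioning in Proposition~\ref{prop:unstable}; and (iii) bound the number of connected square-complexes on $t$ vertices by a factor subexponential in $s$, so that the structural multiplicity does not overwhelm the geometric decay. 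Carrying (i)--(iii) uniformly over all component sizes up to $N/2 \approx n^2\log^2 n$ is the technical obstacle that keeps this a conjecture rather than a theorem; the first-moment skeleton above, however, indicates that no new threshold phenomenon intervenes between $s=1$ and the largest components.
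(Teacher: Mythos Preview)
The statement you are addressing is labeled a \textbf{Conjecture} in the paper, not a theorem: the paper offers no proof. All the paper says is that above the threshold a first-moment bound shows $\square(\Gamma)$ has no isolated vertices, and that this suggests the connectivity threshold by analogy with the classical Erd\H{o}s--R\'enyi argument. So there is no ``paper's own proof'' to compare against.

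Your proposal is therefore not competing with anything in the paper; it is a sketch of how one might attack the conjecture, and you yourself correctly flag it as incomplete. As a heuristic outline it is reasonable and matches the analogy the paper gestures at: encode a putative small component by its support and pair graph, use the saturation of each opposite pair as the source of the $n^{-(1+\epsilon)^2}$ decay, and balance this against the entropy of choosing the structure. The arithmetic $t\le 2m$, $m\le s+1$ and the resulting exponent bound are fine for the schematic union bound.

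That said, be aware that what you have is a plan, not a proof. The genuine difficulties are exactly the ones you list under (i)--(iii), and they are substantial: the dependency between saturation events across many overlapping pairs is more entangled than in the cycle case of Proposition~\ref{prop:unstable} (where the pairs sit on a single cycle and overlap in a controlled way), and the structural enumeration in (iii) --- counting connected square-complexes on $t$ vertices with $s$ squares --- is not obviously subexponential in $s$ without further argument. There is also a subtlety you glossed over: the ``saturation'' condition for a component is not merely that each opposite pair has no outside common neighbor, but that no outside induced $4$--cycle shares a non-adjacent pair with $S$; an outside common neighbor adjacent to the existing common neighbor would not create a new induced square, so the per-pair probability is not exactly $(1-p^2)^n$ but the clique-corrected version from Proposition~\ref{prop:unstable}. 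None of this invalidates your plan, but it explains why the paper leaves the statement as a conjecture.
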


\bibliography{Random_RACG_Morse}

\providecommand{\bysame}{\leavevmode\hbox to3em{\hrulefill}\thinspace}
\providecommand{\MR}{\relax\ifhmode\unskip\space\fi MR }
\providecommand{\MRhref}[2]{%
  \href{http://www.ams.org/mathscinet-getitem?mr=#1}{#2}
}
\providecommand{\href}[2]{#2}
\begin{thebibliography}{BFRHS18}

\bibitem[AS16]{AlonSpencer}
Noga Alon and Joel~H. Spencer, \emph{The probabilistic method}, fourth ed.,
  Wiley Series in Discrete Mathematics and Optimization, John Wiley \& Sons,
  Inc., Hoboken, NJ, 2016. \MR{3524748}

\bibitem[BC12]{BehrstockCharney}
J.~Behrstock and R.~Charney, \emph{{Divergence and quasimorphisms of
  right-angled Artin groups}}, Math. Ann. \textbf{352} (2012), 339--356.

\bibitem[Beh19]{Behrstock:Morse}
Jason Behrstock, \emph{A counterexample to questions about boundaries,
  stability, and commensurability}, Beyond hyperbolicity, London Math. Soc.
  Lecture Note Ser., vol. 454, Cambridge Univ. Press, Cambridge, 2019,
  pp.~151--159. \MR{3966609}

\bibitem[BFRHS18]{BFRHS}
Jason Behrstock, Victor Falgas-Ravry, Mark~F. Hagen, and Tim Susse,
  \emph{Global structural properties of random graphs}, Int. Math. Res. Not.
  IMRN (2018), no.~5, 1411--1441. \MR{3801467}

\bibitem[BFRS20]{BFRS}
Jason Behrstock, Victor Falgas-Ravry, and Tim Susse, \emph{Square percolation
  and the threshold for quadratic divergence in random right-angled {C}oxeter
  groups}, Random Structures Algorithms (2020), To appear. ar{X}iv:2009.14442.

\bibitem[BHS17]{BHS:Coxeter}
Jason Behrstock, Mark~F. Hagen, and Alessandro Sisto, \emph{Thickness, relative
  hyperbolicity, and randomness in {C}oxeter groups}, Algebr. Geom. Topol.
  \textbf{17} (2017), no.~2, 705--740, With an appendix written jointly with
  Pierre-Emmanuel Caprace. \MR{3623669}

\bibitem[CCS19]{CCS:Morse}
Ruth Charney, Matthew Cordes, and Alessandro Sisto, \emph{Complete topological
  descriptions of certain morse boundaries}, ar{X}iv:1908.03542 (2019).

\bibitem[CF12]{CharneyFarber}
Ruth Charney and Michael Farber, \emph{Random groups arising as graph
  products}, Alg. Geom. Topol. (2012), no.~12, 979--996.

\bibitem[Cor17]{Cordes:Morse}
Matthew Cordes, \emph{Morse boundaries of proper geodesic metric spaces},
  Groups Geom. Dyn. \textbf{11} (2017), no.~4, 1281--1306. \MR{3737283}

\bibitem[CS15]{CharneySultan}
Ruth Charney and Harold Sultan, \emph{Contracting boundaries of {$\rm CAT(0)$}
  spaces}, J. Topol. \textbf{8} (2015), no.~1, 93--117. \MR{3339446}

\bibitem[DJ00]{DavisJanuszkiewicz}
Michael~W. Davis and Tadeusz Januszkiewicz, \emph{Right-angled {A}rtin groups
  are commensurable with right-angled {C}oxeter groups}, J. Pure Appl. Algebra
  \textbf{153} (2000), no.~3, 229--235.

\bibitem[DT15a]{DaniThomas:divcox}
Pallavi Dani and Anne Thomas, \emph{Divergence in right-angled {C}oxeter
  groups}, Trans. Amer. Math. Soc. \textbf{367} (2015), no.~5, 3549--3577.
  \MR{3314816}

\bibitem[DT15b]{DT:stable}
Matthew~Gentry Durham and Samuel~J. Taylor, \emph{Convex cocompactness and
  stability in mapping class groups}, Algebr. Geom. Topol. \textbf{15} (2015),
  no.~5, 2839--2859. \MR{3426695}

\bibitem[ER59]{ErdosRenyi1}
Paul Erd{\H{o}}s and Alfred R{\'{e}}nyi, \emph{On random graphs. {I.}},
  Publicationes Mathematicae (1959), no.~6, 290--297.

\bibitem[Gen19]{Genevois:Hyperbolicities}
Anthony Genevois, \emph{Hyperbolicities in {${\rm CAT}(0)$} cube complexes},
  Enseign. Math. \textbf{65} (2019), no.~1-2, 33--100. \MR{4057355}

\bibitem[GKLS20]{GKLS}
Marius Graeber, Annette Karrer, Nir Lazarovich, and Emily Stark,
  \emph{Surprising circles in {M}orse boundaries of right-angled {C}oxeter
  groups}, ar{X}iv:2009.07654 (2020).

\bibitem[GLR04]{GLR:surfacesubgroups}
C.~McA. Gordon, D.~D. Long, and A.~W. Reid, \emph{Surface subgroups of
  {C}oxeter and {A}rtin groups}, J. Pure Appl. Algebra \textbf{189} (2004),
  no.~1-3, 135--148. \MR{2038569}

\bibitem[Kar21]{Karrer:PropC}
Annette Karrer, \emph{Right-angled {C}oxeter groups with totally disconneted
  {M}orse boundaries}, ar{X}iv:2105.04029 (2021).

\bibitem[Lev18]{Levcovitz:CFSdiv}
Ivan Levcovitz, \emph{Divergence of {$\rm CAT(0)$} cube complexes and {C}oxeter
  groups}, Algebr. Geom. Topol. \textbf{18} (2018), no.~3, 1633--1673.
  \MR{3784015}

\bibitem[NT19]{NT:RACG}
Hoang~Thanh Nguyen and Hung~Cong Tran, \emph{On the coarse geometry of certain
  right-angled {C}oxeter groups}, Algebr. Geom. Topol. \textbf{19} (2019),
  no.~6, 3075--3118. \MR{4023336}

\bibitem[Rad03]{Radcliffe:rigidity}
David~G. Radcliffe, \emph{Rigidity of graph products of groups}, Algebr. Geom.
  Topol. \textbf{3} (2003), 1079--1088. \MR{2012965}

\bibitem[RST21]{RST:convexity}
Jacob Russell, Davide Spriano, and Hung~Cong Tran, \emph{Convexity in
  hierarchically hyperbolic spaces}, ar{X}iv:1809.09303 (2021).

\bibitem[Tra19]{Tran:quasiconvex}
Hung~Cong Tran, \emph{On strongly quasiconvex subgroups}, Geom. Topol.
  \textbf{23} (2019), no.~3, 1173--1235. \MR{3956891}

\end{thebibliography}
\bibliographystyle{amsalpha}
		
\end{document}